\newtheorem{theorem}{Theorem}[section]
\newtheorem{proposition}[theorem]{Proposition}
\newtheorem{definition}[theorem]{Definition}
\newtheorem{remark}[theorem]{Remark}
\newtheorem{corollary}[theorem]{Corollary}
\newcommand{\Z}{\mathbb{Z}}
\newcommand{\C}{\mathbb{C}}
\newcommand{\G}{\mathrm{GL}}
\newcommand{\localField}{K}
\newcommand{\finiteField}{k}
\newcommand{\tr}[1]{\mathrm{Tr}_{\finiteField_{#1}/\finiteField}}
\title{Epsilon Factors of Representations of Finite General Linear Groups}
\author{Rongqing Ye}
\address{Department of Mathematics, Purdue University, West Lafayette, IN 47907, USA}
\email{ye271@purdue.edu}
\author{Elad Zelingher}
\address{Department of Mathematics, Yale University, New Haven, CT 06510, USA}
\email{elad.zelingher@yale.edu}
\date{March 25, 2019. Last updated: November 5, 2020}
\keywords{epsilon factors, gamma factors, Gauss sums}
\subjclass[2010]{11L05, 20C33}
\begin{document}

\begin{abstract}
We define epsilon factors for irreducible representations of finite general linear groups using Macdonald's correspondence. These epsilon factors satisfy multiplicativity, and are expressible as products of Gauss sums. The tensor product epsilon factors are related to the Rankin-Selberg gamma factors, by which we prove that the Rankin-Selberg gamma factors can be written as products of Gauss sums. The exterior square epsilon factors relate the Jacquet-Shalika exterior square gamma factors and the Langlands-Shahidi exterior square gamma factors for level zero supercuspidal representations. We prove that these exterior square factors coincide in a special case.
\end{abstract}

\maketitle

\section{Introduction}\label{sec:introduction}

Long before the local Langlands correspondence was established by Harris-Taylor \cite{HarrisTaylor01} and Henniart \cite{Henniart00}, Macdonald had already established a correspondence between irreducible representations of $\G_n(\finiteField)$, $k$ a finite field, and inertia equivalence classes of admissible tamely ramified $n$-dimensional Weil-Deligne representations of $W_\localField$, where $\localField$ is a non-archimedean local field with residue field $\finiteField$ and $W_\localField$ is the Weil group of $\localField$. This correspondence matches epsilon factors. In view of \cite[(A.1)]{SilbergerZink08}, Macdonald's correspondence is the restriction of the local Langlands correspondence to level zero representations. 

Let $\mathfrak{o}$ be the ring of integers of $\localField$, $\mathfrak{p} \subset \mathfrak{o}$ be its prime ideal. Let $q$ be the size of $\finiteField$. Let $\psi$ be an additive character of $\localField$ of conductor $\mathfrak{p}$, i.e., $\psi$ is trivial on $\mathfrak{p}$ but not on $\mathfrak{o}$. Thus, it descends to a non-trivial additive character $\psi$ on $\finiteField$. Let $dx$ be the Haar measure on $\localField$ normalized such that $\mathfrak{p}$ has volume $q^{-\frac{1}{2}}$. If $\phi$ is a tamely ramified $n$-dimensional Weil-Deligne representation of $W_\localField$ corresponding to the irreducible representation $\pi$ of $\G_n(\finiteField)$, then the match of epsilon factors asserts that
$$\epsilon(\pi, \psi) = \epsilon_0(\phi, \psi, dx).$$
Here $\epsilon(\pi, \psi)$ is the Godement-Jacquet epsilon factor of $\pi$ defined in \cite{Macdonald80}, and $\epsilon_0(\phi, \psi, dx)$ is the arithmetic epsilon factor defined by Deligne in \cite{Deligne73}. We note that operations such as direct sums, tensor products, exterior powers and symmetric powers preserve tame ramification of Weil-Deligne representations of $W_\localField$. Thus in the spirit of Macdonald's correspondence, we define various $\epsilon_0$-factors of representations over the finite field with respect to these operations in a way that they match the arithmetic $\epsilon_0$-factors of Deligne, see \Cref{defn:epsilon}. The immediate benefit of such definition is that $\epsilon_0$-factors over finite fields inherit good properties from the arithmetic $\epsilon_0$-factors of Deligne. The most important ones are multiplicativity and being expressible as products of Gauss sums.

$\epsilon_0$-factors over finite fields agree with those gamma factors coming from integral representations. In \Cref{sec:tensor_epsilon_factors}, we write down explicitly the formulas for the finite $\epsilon_0$-factors with respect to the tensor product operation in terms of Gauss sums. Then we show that they are equal to the corresponding Rankin-Selberg gamma factors defined in \cite{Roditty10,Nien14,Ye18}, up to some effective constants. Since $\epsilon_0$-factors can be written as products of Gauss sums, we then prove in \Cref{cor:rankin_selberg_factor_as_a_product_of_gauss_sums} that the Rankin-Selberg gamma factors are also products of Gauss sums, which answers \cite[Conjecture 2.2]{NienZhang18}.

\Cref{sec:exterior_square_epsilon_factors} is similar to \Cref{sec:tensor_epsilon_factors}, only that we analyze the exterior square epsilon factors in this section. The exterior square epsilon factors are also multiplicative and are products of Gauss sums. We will not repeat the proofs of these statements, since the techniques are demonstrated in \Cref{sec:tensor_epsilon_factors}. The exterior square epsilon factor is equal, up to some constant $c_f$, to the exterior square gamma factor defined in \cite{YeZeligher18}. Unfortunately, the constants $c_f$ are not effective. We conjecture that $c_f = 1$, which is roughly equivalent to the statement that the two exterior square epsilon factors coming from the Jacquet-Shalika integral representation and the Langlands-Shahidi method coincide.

\emph{Acknowledgments.} We would like to thank Colin Bushnell for pointing us to \cite{Macdonald80}. We are especially grateful to James Cogdell for careful readings of the paper and his helpful comments.

\section{Tamely ramified representations}\label{sec:tamely_ramified_representations}

Let $\localField$ be a non-archimedean local field with residue field $\finiteField$ of size $q$. Let $\mathfrak{o}$ be the ring of integers of $\localField$ and $\mathfrak{p} = (\varpi)$ be the maximal ideal of $\mathfrak{o}$, where $\varpi$ is a fixed uniformizer. Then $\finiteField$ is isomorphic to $\mathfrak{o} / \mathfrak{p}$. Set $\overline{\localField}$ and $\overline{\finiteField}$ as the separable closures of $\localField$ and $\finiteField$ respectively. We have a short exact sequence
\begin{equation*}
    1 \to I
    \to \mathrm{Gal}(\overline{\localField}/\localField)
    \to \mathrm{Gal}(\overline{\finiteField}/\finiteField) \cong \widehat{\Z}
    \to 1,
\end{equation*}
where $I$ is called the inertia subgroup, and $\widehat{\Z} = \underset{m}{\varprojlim} \Z/m\Z$ is the inverse limit of the Galois groups of the finite degree field extensions of $\finiteField$. The pro-$p$ subgroup $P$ of $I$ (where $p$ is the characteristic of $\finiteField$) is the wild inertia subgroup of $\localField$. Taking the preimage of $\Z \subset \widehat{\Z}$, we obtain another short exact sequence involving the Weil group $W$ of $\localField$:
\begin{equation*}
    1 \to I \to W(\overline{\localField}/\localField) \to \Z \to 1.
\end{equation*}
For convenience, we set $W = W(\overline{\localField}/\localField)$. Let $\localField^{un} \subset \overline{\localField}$ be the maximal unramified extension of $\localField$. Let $F \in \mathrm{Gal}(\localField^{un}/\localField)$ be a geometric Frobenius element, i.e., it is the inverse image of an automorphism of $\overline{\finiteField} \slash \finiteField$, also denoted by $F$, defined by $F(x^q) = x$ for $x \in \overline{\finiteField}$. Then $W = I \rtimes \langle F \rangle$. We define a norm $\lVert \cdot \rVert$ on $W$ by setting
$$\lVert i \rVert = 1, \lVert F \rVert = q^{-1},$$
where $i \in I$.

A Weil-Deligne representation of $W$ is a pair $\phi = (\rho, N)$ satisfying the following conditions:
\begin{enumerate}[(1)]
    \item $\rho: W \to \G(V)$ is a finite dimensional representation on $V$ over $\C$, such that $\rho(w)$ is semisimple for $w \in W$, and $\ker(\rho)$ contains an open subgroup of $I$;

    \item $N: V \to V$ is nilpotent, and $\rho(w)N\rho(w)^{-1} = \lVert w \rVert \cdot N$ for $w \in W$.
\end{enumerate}
The \emph{degree} (or \emph{dimension}) of $\phi$ is set to be $\dim \rho$. If $\ker(\rho)$ contains $I$ (resp. $P$), then $\rho$ and $\phi$ are said to be \emph{unramified} (resp. \emph{tamely ramified}). Two Weil-Deligne representations $\phi = (\rho, N)$ and $\phi^\prime = (\rho^\prime, N^\prime)$ are \emph{equivalent} if there exists a linear isomorphism $\alpha: V \to V^\prime$ such that the following two diagrams commute for all $w \in W$, where $V$ and $V^\prime$ are the underlying vector spaces of $\rho$ and $\rho^\prime$ respectively.

\begin{center}
\begin{tabular}{c c c}
    \begin{tikzcd}
        V \arrow[r, "\alpha"] \arrow[d, "\rho(w)" left] & V^\prime \arrow[d, "\rho^\prime(w)"] \\
        V \arrow[r, "\alpha"] & V^\prime
    \end{tikzcd}
    & \hspace{1cm} &
    \begin{tikzcd}
        V \arrow[r, "\alpha"] \arrow[d, "N" left] & V^\prime \arrow[d, "N^\prime"] \\
        V \arrow[r, "\alpha"] & V^\prime
    \end{tikzcd}
\end{tabular}
\end{center}
Similarly, $\phi$ and $\phi^\prime$ are said to be \emph{$I$-equivalent} if the above diagrams commute with $\rho$ and $\rho^\prime$ replaced by their restrictions to $I$ with some linear isomorphism $\alpha: V \to V^\prime$. Following the notations in \cite[Section 3]{Macdonald80}, we set $\Phi^t(\G_n)$ to be the set of equivalence classes of tamely ramified Weil-Deligne representations of $W$ of degree $n$, and set $\Phi^t_I(\G_n)$ to be the set of $I$-equivalence classes of tamely ramified Weil-Deligne representations of $W$ of degree $n$.

For a Weil representation $\rho: W \to \G(V)$, Deligne \cite[Section 4 and 5]{Deligne73} defined the epsilon factors $\epsilon(\rho, \psi, dx)$ and $\epsilon_0(\rho, \psi, dx)$ associated to it, where $\psi$ is a non-trivial additive character of $\localField$ and $dx$ is an arbitrary Haar measure on $\localField$. These two epsilon factors are non-zero constants related by
\begin{equation}\label{eqn:relation_of_epsilon_factors_for_weil_representation}
    \epsilon(\rho, \psi, dx) = \epsilon_0(\rho, \psi, dx) \det \left(-F, V^I\right)^{-1},
\end{equation}
where $V^I$ is the maximal subspace of $V$ on which $\rho(I)$ acts trivially. For a Weil-Deligne representation $\phi = (\rho, N)$, following \cite[8.12]{Deligne73}, we can also define the epsilon factors associated to it by
\begin{equation}\label{eqn:epsilon0_factor_of_weil_deligne_representation}
    \epsilon_0(\phi, \psi, dx) = \epsilon_0(\rho, \psi, dx),
\end{equation}
and
\begin{equation}\label{eqn:epsilon_factor_of_weil_deligne_representation}
    \epsilon(\phi, \psi, dx) = \epsilon_0(\rho, \psi, dx)\det\left(-F, V_N^I\right)^{-1},
\end{equation}
where $V^I_N$ is the null space of $N: V^I \to V^I$ ($N$ preserves $V^I$ because $\rho(i)N\rho(i)^{-1} = \lVert i \rVert \cdot N = N$ for $i \in I$).

These epsilon factors are in general hard to made explicit. Deligne computed $\epsilon_0$ for the case where $\rho$ is tamely ramified.

\begin{theorem}[{\cite[Section 5.16]{Deligne73}}]\label{thm:explicit_epsilon0_factor_for_tame_representation}
    Let $\rho: W = W(\overline{\localField}/\localField) \to \G(V)$ be a tamely ramified representation, i.e., $\rho(P) \equiv 1$. Then
    \begin{enumerate}[(1)]
        \item $\rho$ is a direct sum of induced representations of tamely ramified characters $\chi_i : K_i^{\times} \rightarrow \mathbb{C}^{\times}$ of unramified extensions $\localField_i/\localField$, i.e.,
        $$\rho = \sum_{i=1}^r \mathrm{Ind}_{W(\overline{\localField}/\localField_i)}^W \chi_i,$$
        where $i = 1, \cdots, r$ for some integer $r$, and $\chi_i$ is treated as a character of $W(\overline{\localField}/\localField_i)$ via the natural map $W(\overline{\localField}/\localField_i) \to W(\overline{\localField}/\localField_i)^{ab} \cong \localField_i^\times$ from local class field theory, normalized so that $F$ is sent to $\varpi$.

        \item Let $\psi$ be an additive character of $\localField$ of conductor $\mathfrak{p}$, that is, $\psi$ is trivial on $\mathfrak{p}$ but not on $\mathfrak{o}$. Then $\psi$ can be treated as a character $\psi_\finiteField$ of $\finiteField$ via the isomorphism $\mathfrak{o} / \mathfrak{p} \to \finiteField$. Let $dx$ be the Haar measure on $\localField$, normalized such that $\int_\mathfrak{p} \, dx = q^{-\frac{1}{2}}$. Then
        $$\epsilon_0(\rho, \psi, dx) = (-1)^{\dim V} q^{-\frac{\dim V}{2}} \prod_{i=1}^r \tau(\widetilde{\chi}_i, \psi_{\finiteField_i}),$$
        where $\finiteField_i$ is the residue field of $\localField_i$, $\psi_{\finiteField_i} = \psi_\finiteField \circ \tr{i}$ is an additive character of $\finiteField_i$, $\widetilde{\chi}_i$ is the multiplicative character of $\finiteField_i^\times$ defined by $\chi_i$, and $\tau(\widetilde{\chi}_i, \psi_{\finiteField_i})$ is the Gauss sum of $\widetilde{\chi}_i$ with respect to the additive character $\psi_{\finiteField_i}$ defined by
        $$\tau(\widetilde{\chi}_i, \psi_{\finiteField_i}) = - \sum_{x \in \finiteField_i^\times} \widetilde{\chi}_i \left(x^{-1}\right) \psi_{\finiteField_i}(x).$$
    \end{enumerate}
\end{theorem}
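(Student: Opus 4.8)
The plan is to reconstruct Deligne's argument from \cite[\S 5]{Deligne73}: part~(1) is representation theory of the tame quotient of $W$, and part~(2) follows from the formal properties of $\epsilon_0$ together with an explicit evaluation for tamely ramified characters. For part~(1): since $\rho(P) \equiv 1$, the representation $\rho$ factors through the tame quotient $W/P$. The tame inertia $I/P$ is procyclic of pro-order prime to $p$, and geometric Frobenius $F$ acts on it by conjugation through an automorphism that, on every finite quotient, is the $q$-th power map (up to inversion). Since $\ker\rho$ contains an open subgroup of $I$, the restriction $\rho|_{I/P}$ factors through a finite cyclic group, so $V = \bigoplus_\theta V_\theta$ splits into character spaces for characters $\theta$ of $I/P$, and the commutation relation forces $\rho(F)$ to permute the $V_\theta$ along the $F$-orbits $\{\theta, \theta^q, \theta^{q^2}, \dots\}$. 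Fixing an orbit of length $d$, Clifford theory together with the Mackey irreducibility criterion shows that the span of the $V_\theta$ over this orbit is a direct sum of copies of $\mathrm{Ind}_{W(\overline{\localField}/\localField_d)}^W \widetilde{\theta}$, where $\localField_d/\localField$ is the unramified extension of degree $d$ and $\widetilde{\theta}$ is any extension of $\theta$ to $W(\overline{\localField}/\localField_d) = I \rtimes \langle F^d\rangle$; such an extension exists because $\theta$ is $F^d$-invariant and one may assign the value at $F^d$ freely. Transporting $\widetilde{\theta}$ through local class field theory, normalized so that $F \mapsto \varpi$, produces a character $\chi_d$ of $\localField_d^\times$ trivial on $1 + \mathfrak{p}_d$, i.e.\ tamely ramified; running over all orbits yields $\rho = \sum_i \mathrm{Ind}_{W(\overline{\localField}/\localField_i)}^W \chi_i$.

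For part~(2), I would reduce to the case of a single character. Additivity of $\epsilon_0$ gives $\epsilon_0(\rho, \psi, dx) = \prod_i \epsilon_0(\mathrm{Ind}_{W(\overline{\localField}/\localField_i)}^W \chi_i, \psi, dx)$, and inductivity gives $\epsilon_0(\mathrm{Ind}_{W(\overline{\localField}/\localField_i)}^W \chi_i, \psi, dx) = \lambda_{\localField_i/\localField}(\psi, dx)\, \epsilon_0(\chi_i, \psi \circ \mathrm{Tr}_{\localField_i/\localField}, dx_i)$, where $dx_i$ is the Haar measure on $\localField_i$ with $\int_{\mathfrak{p}_i} dx_i = q^{-[\localField_i:\localField]/2}$. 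Because $\localField_i/\localField$ is unramified, the different is trivial, $\psi \circ \mathrm{Tr}_{\localField_i/\localField}$ still has conductor $\mathfrak{p}_i$ (the residue-field trace $\finiteField_i \to \finiteField$ being surjective), and the Langlands--Deligne constant degenerates to the sign $\lambda_{\localField_i/\localField}(\psi, dx) = (-1)^{[\localField_i:\localField]-1}$; this last value follows from the identity $\lambda_{\localField_i/\localField} = \epsilon_0(\mathrm{Ind}_{W(\overline{\localField}/\localField_i)}^W \mathbf{1})/\epsilon_0(\mathbf{1})$ and the decomposition of $\mathrm{Ind}_{W(\overline{\localField}/\localField_i)}^W \mathbf{1}$ into unramified characters, or can be taken directly from \cite[\S 5]{Deligne73}.

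It then remains to evaluate $\epsilon_0(\chi_i, \psi \circ \mathrm{Tr}_{\localField_i/\localField}, dx_i)$; write $d_i = [\localField_i:\localField]$, so $\finiteField_i$ has $q^{d_i}$ elements and $\int_{\mathfrak{p}_i} dx_i = q^{-d_i/2}$. When $\chi_i$ is genuinely ramified, its conductor exponent equals $1$, which cancels that of $\psi \circ \mathrm{Tr}_{\localField_i/\localField}$, so Tate's integral for $\epsilon_0$ runs over $\mathfrak{o}_i^\times$ and $\epsilon_0(\chi_i, \psi \circ \mathrm{Tr}_{\localField_i/\localField}, dx_i) = \int_{\mathfrak{o}_i^\times} \chi_i^{-1}(x)\, \psi(\mathrm{Tr}_{\localField_i/\localField}(x))\, dx_i$. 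Breaking $\mathfrak{o}_i^\times$ into cosets of $1 + \mathfrak{p}_i$ and using that $\chi_i$ is trivial on $1 + \mathfrak{p}_i$ while $\psi \circ \mathrm{Tr}_{\localField_i/\localField}$ is trivial on $\mathfrak{p}_i$, the integral becomes $q^{-d_i/2} \sum_{u \in \finiteField_i^\times} \widetilde{\chi}_i^{-1}(u)\, \psi_{\finiteField_i}(u) = -q^{-d_i/2}\, \tau(\widetilde{\chi}_i, \psi_{\finiteField_i})$; the unramified constituents, for which $\widetilde{\chi}_i = \mathbf{1}$ and $\tau(\mathbf{1}, \cdot) = 1$, give $-q^{-d_i/2}$ via the relation $\epsilon_0 = \epsilon \cdot \det(-F, V^I)^{-1}$ and the known value of $\epsilon$ on unramified characters. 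Combining with $\lambda_{\localField_i/\localField}(\psi, dx) = (-1)^{d_i-1}$ gives $\epsilon_0(\mathrm{Ind}_{W(\overline{\localField}/\localField_i)}^W \chi_i, \psi, dx) = (-1)^{d_i} q^{-d_i/2}\, \tau(\widetilde{\chi}_i, \psi_{\finiteField_i})$, and multiplying over $i$, with $\dim V = \sum_i d_i$, yields the asserted formula.

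The main obstacle is the normalization bookkeeping in part~(2). Deligne's $\epsilon_0$ differs from $\epsilon$ by the factor $\det(-F, V^I)^{-1}$, and one must simultaneously track this correction, the conductor of $\psi$, the precise Haar measure, the conductor exponent of a tamely ramified character, and the value of $\lambda_{\localField_i/\localField}(\psi, dx)$ for an unramified extension; a stray sign or power of $q$ in any intermediate step corrupts the final constant $(-1)^{\dim V} q^{-\dim V/2}$. By contrast, the decomposition in part~(1) and the additivity and inductivity reductions are routine once the structure of $W/P$ is in hand.
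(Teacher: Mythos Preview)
The paper does not prove this theorem at all; it is stated with a citation to \cite[Section~5.16]{Deligne73} and then immediately used. So there is no ``paper's own proof'' to compare against.

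Your reconstruction is essentially Deligne's argument and is correct in structure: Clifford theory on the tame quotient $W/P$ for part~(1), then additivity plus inductivity in degree zero reducing to the one-dimensional tamely ramified case, where the Tate integral over $\mathfrak{o}_i^\times$ collapses to the residue-field Gauss sum. The identification $\lambda_{\localField_i/\localField}(\psi,dx)=(-1)^{d_i-1}$ for unramified $\localField_i/\localField$ and the measure normalization are handled correctly. One small slip: the relation between $\epsilon$ and $\epsilon_0$ is $\epsilon_0 = \epsilon \cdot \det(-F,V^I)$, not $\epsilon \cdot \det(-F,V^I)^{-1}$ as you wrote (compare the paper's \eqref{eqn:relation_of_epsilon_factors_for_weil_representation}); this does not affect your final formula since you independently verify $\tau(\mathbf{1},\psi_{\finiteField_i})=1$ and the unramified contribution $-q^{-d_i/2}$ is correct either way, but it is worth fixing. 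Your own closing paragraph already flags exactly the right hazard: the entire content of part~(2) is bookkeeping, and a single misplaced sign or exponent is fatal.
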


These epsilon factors are equal for two representations in the same equivalence class. They might not be equal for two representations that are only in the same $I$-equivalence class. However, an immediate corollary of the above theorem is that $\epsilon_0(\phi, \psi, dx)$ is well-defined on $\Phi^t_I(\G_n)$, since the Frobenius element $F$ is not involved in the explicit formula of $\epsilon_0(\rho, \psi, dx)$.

\begin{corollary}\label{cor:epsilon0_factors_are_defined_on_I_equivence_classes}
    Let $\phi = (\rho, N)$ and $\phi^\prime = (\rho^\prime, N^\prime)$ be two tamely ramified Weil-Deligne representations of $W$ such that $\rho_I \cong \rho^\prime_I$. Then for any Haar measure $dx$ on $\localField$,
    $$\epsilon_0(\phi, \psi, dx) = \epsilon_0(\phi^\prime, \psi, dx).$$
    In particular, the above equality holds if $\phi$ and $\phi^\prime$ are $I$-equivalent.
\end{corollary}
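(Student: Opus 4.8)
The plan is to reduce everything to Deligne's explicit formula in \Cref{thm:explicit_epsilon0_factor_for_tame_representation} and to notice that every quantity occurring there is already determined by the restriction to the inertia subgroup $I$. By \eqref{eqn:epsilon0_factor_of_weil_deligne_representation} the operator $N$ plays no role, so $\epsilon_0(\phi, \psi, dx) = \epsilon_0(\rho, \psi, dx)$ and $\epsilon_0(\phi^\prime, \psi, dx) = \epsilon_0(\rho^\prime, \psi, dx)$, and it suffices to show $\epsilon_0(\rho, \psi, dx) = \epsilon_0(\rho^\prime, \psi, dx)$. Since $\rho_I \cong \rho^\prime_I$ we have $\dim V = \dim V^\prime$; as replacing $dx$ by $c\,dx$ multiplies $\epsilon_0$ by $c^{\dim V}$, we may assume $dx$ is normalized as in \Cref{thm:explicit_epsilon0_factor_for_tame_representation} and that $\psi$ is our fixed character of conductor $\mathfrak{p}$. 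Writing $\rho = \bigoplus_{i=1}^r \mathrm{Ind}_{W(\overline{\localField}/\localField_i)}^W \chi_i$ and $\rho^\prime = \bigoplus_{i=1}^{r^\prime} \mathrm{Ind}_{W(\overline{\localField}/\localField_i^\prime)}^W \chi_i^\prime$ and applying that theorem, the identity we must prove is
\[
\prod_{i=1}^{r} \tau\!\left(\widetilde{\chi}_i, \psi_{\finiteField_i}\right) = \prod_{i=1}^{r^\prime} \tau\!\left(\widetilde{\chi}_i^\prime, \psi_{\finiteField_i^\prime}\right),
\]
the prefactors $(-1)^{\dim V} q^{-\frac{\dim V}{2}}$ being equal.

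I would prove this by showing the left-hand product is an invariant of $\rho_I$. Choose the decomposition so that each summand $\mathrm{Ind}_{W(\overline{\localField}/\localField_i)}^W \chi_i$ is irreducible, equivalently so that $\chi_i$ is fixed by no nontrivial element of $\mathrm{Gal}(\localField_i/\localField)$; such a decomposition exists by the classification of irreducible tamely ramified representations of $W$, and the value of $\epsilon_0$ does not depend on the choice. Because $\localField_i/\localField$ is unramified, $I \subseteq W(\overline{\localField}/\localField_i)$ and this subgroup is normal in $W$, so Mackey's formula collapses to $\mathrm{Res}_I \mathrm{Ind}_{W(\overline{\localField}/\localField_i)}^W \chi_i \cong \bigoplus_{\sigma \in \mathrm{Gal}(\localField_i/\localField)} {}^{\sigma}(\chi_i|_I)$. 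Under local class field theory $\chi_i|_I$ corresponds to $\widetilde{\chi}_i$, viewed as a character of $\finiteField_i^\times$, and the $\mathrm{Gal}(\localField_i/\localField)$-action becomes the Frobenius ($q$-th power) action on characters of $\finiteField_i^\times$; by our primitivity assumption this orbit is multiplicity-free of size $[\finiteField_i:\finiteField]$. Hence $\rho_I$ is the direct sum over $i$ of the Frobenius orbits of the $\widetilde{\chi}_i$, so the multiset of these orbits---and with it, for each orbit $O$, the field $\finiteField_i$ (determined by $[\finiteField_i:\finiteField] = |O|$) together with $\widetilde{\chi}_i$ up to Frobenius conjugacy---is recovered from $\rho_I$.

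It remains to observe that $\tau(\widetilde{\chi}_i, \psi_{\finiteField_i})$ depends only on the Frobenius orbit of $\widetilde{\chi}_i$: since $\psi_{\finiteField_i} = \psi_{\finiteField} \circ \tr{i}$ and $\tr{i}(x^q) = \tr{i}(x)$, one has $\psi_{\finiteField_i}(x^q) = \psi_{\finiteField_i}(x)$, and as $x \mapsto x^q$ permutes $\finiteField_i^\times$ the substitution $x \mapsto x^q$ in the defining sum gives $\tau(\widetilde{\chi}_i^{\,q}, \psi_{\finiteField_i}) = \tau(\widetilde{\chi}_i, \psi_{\finiteField_i})$. Thus $\prod_i \tau(\widetilde{\chi}_i, \psi_{\finiteField_i})$ is a function of $\rho_I$ alone, which together with $\dim V = \dim V^\prime$ yields the corollary; the final assertion is immediate, since $I$-equivalence of $\phi$ and $\phi^\prime$ forces $\rho_I \cong \rho^\prime_I$. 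The step requiring the most care is the collapse of $\mathrm{Res}_I \mathrm{Ind}$ to a multiplicity-free Frobenius orbit: one must check that Galois-primitivity of $\chi_i$ over $\localField_i$ is equivalent to primitivity of $\chi_i|_I$---which uses that an unramified character of $\localField_i^\times$, being pinned down by its value on $\varpi \in \localField^\times$, is automatically $\mathrm{Gal}(\localField_i/\localField)$-invariant---and to track precisely how the geometric Frobenius acts on characters of $\finiteField_i^\times$ under the normalization $F \mapsto \varpi$ of class field theory.
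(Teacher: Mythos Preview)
Your proof is correct and follows the same approach the paper indicates: both rely on Deligne's explicit formula in \Cref{thm:explicit_epsilon0_factor_for_tame_representation} and observe that every ingredient there is already determined by $\rho_I$. The paper's justification is a single sentence (``the Frobenius element $F$ is not involved in the explicit formula''), while you supply the details that make that sentence precise---namely the Mackey computation identifying $\rho_I$ with the union of Frobenius orbits of the $\widetilde{\chi}_i$, and the verification (which the paper carries out just after the statement of \Cref{thm:explicit_epsilon0_factor_for_tame_representation}) that $\tau(\gamma,\psi_n)$ is constant on such orbits.
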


Tamely ramified Weil-Deligne representations are parameterized in \cite[Section 3]{Macdonald80}. We now recall this parameterization. A \emph{partition} of a non-negative integer $n$ is a tuple $\lambda = (\lambda_1, \lambda_2, \cdots, \lambda_r)$ such that $\lambda_1 \ge \lambda_2 \ge \cdots \ge \lambda_r > 0$ and $|\lambda| := \sum_{i=1}^r \lambda_i = n$. We define $\mathfrak{n}(\lambda) := r$ to be the number of parts of $\lambda$. For convenience, we sometimes write $\lambda = (\lambda_i)$ and ignore the ordering of $\lambda_i$'s. For example, if $\lambda = (\lambda_i)$ and $\mu = (\mu_j)$ are partitions of $n$ and $m$ respectively, then $\lambda + \mu := (\lambda_i; \mu_j)$, the concatenation of these two partitions, is a partition of $n + m$ and $\lambda \cdot \mu := (\lambda_i\mu_j)$ is a partition of $nm$. We denote the \emph{empty partition} of $0$ by $()$. We set $\mathcal{P}_n$ to be the set of partitions of $n$ and $\mathcal{P} = \bigcup_{n \ge 0} \mathcal{P}_n$.

Let $\finiteField_n$ be the (unique) field extension of $\finiteField$ of degree $n$ in $\overline{\finiteField}$. Let $\Gamma_n$ be the character group of $\finiteField_n^\times$. For $m | n$, the norm map $N_{n, m}: \finiteField_n^\times \to \finiteField_m^\times$ induces an embedding $N_{n, m}: \Gamma_m \to \Gamma_n$. $\{\Gamma_n\}$ forms a directed system under these norm maps, and we define
$$\Gamma = \varinjlim \Gamma_n.$$
The Frobenius element $F$ acts on $\Gamma$ by $F \gamma = \gamma^q$ for $\gamma \in \Gamma$. We identify $\Gamma_n$ with the subgroup $\{\gamma \in \Gamma: F^n \gamma = \gamma\}$ of $\Gamma$. If $f$ is an $F$-orbit in $\Gamma$, we define the \emph{degree} $d(f)$ of $f$ to be the cardinality of $f$. Thus if $\gamma \in f$, then $\gamma \in \Gamma_{d(f)}$.

Let $P_n(\Gamma)$ be the set of partition-valued functions $\lambda: \Gamma \to \mathcal{P}$ such that
\begin{enumerate}[(i)]
\item $\lambda \circ F = \lambda$, i.e., $\lambda$ is constant on the $F$-orbits;

\item $\sum_{\gamma \in \Gamma} |\lambda(\gamma)| = n$ (Since $|\lambda(\gamma)| \ge 0$, it is actually a finite sum).
\end{enumerate}

If $\lambda \in P_n(\Gamma)$, then since $\lambda \circ F = \lambda$, it makes sense to define $\lambda(f) := \lambda(\gamma)$ for any $F$-orbit $f$ and any $\gamma \in f$. Let $\gamma \in \Gamma_n$ be a multiplicative character of $\finiteField_n^\times$ and let $\psi_n$ be the additive character of $\finiteField_n$ defined by $\psi$. In \Cref{thm:explicit_epsilon0_factor_for_tame_representation}, we have defined the Gauss sum of $\gamma$ with respect to $\psi_n$:
$$\tau(\gamma, \psi_n) = - \sum_{x \in \finiteField_n^\times} \gamma \left( x^{-1} \right)\psi_n(x).$$
We observe that $\tau(F(\gamma), \psi_n) = \tau(\gamma, \psi_n)$: since $\psi_n(x^q) = \psi \circ \tr{n}(x^q) = \psi \circ \tr{n}(x)  = \psi_n(x)$ for $x \in \finiteField_n$, and $x \mapsto x^q$ is an automorphism of $\finiteField_n^\times$, we have
\begin{equation*}
    \begin{split}
        \tau(F(\gamma), \psi_n)
        &= \tau(\gamma^q, \psi_n) = - \sum_{x \in \finiteField_n^\times} \gamma\left(x^{-1}\right)^q \psi_n(x) \\
        &= - \sum_{x \in \finiteField_n^\times} \gamma\left(x^{q}\right)^{-1} \psi_n(x)
        = - \sum_{x \in \finiteField_n^\times} \gamma\left(x^{q}\right)^{-1} \psi_n(x^q) \\
        &= - \sum_{x \in \finiteField_n^\times} \gamma\left(x\right)^{-1} \psi_n(x)
        = \tau(\gamma, \psi_n).
    \end{split}
\end{equation*}
Therefore, we can define the Gauss sum of an $F$-orbit. Let $f$ be an $F$-orbit of $\gamma \in \Gamma_n$, we define
$$\tau(f, \psi_n) := \tau(\gamma, \psi_n).$$

\begin{theorem}[\cite{Macdonald80}]\label{thm:parameterization_of_tame_representations}
    There is a natural bijection from $P_n(\Gamma)$ onto $\Phi^t_I(\G_n)$.
\end{theorem}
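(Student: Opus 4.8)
The plan is to construct the bijection explicitly in both directions and verify that the two maps are mutually inverse. For a positive integer $m$, write $\mathrm{Sp}(m) = (\eta_m, N_m)$ for the special Weil--Deligne representation of degree $m$: $\eta_m = \bigoplus_{a=0}^{m-1}\lVert\cdot\rVert^{a}$ is unramified, and $N_m$ is a single regular nilpotent block chosen so that the compatibility $\eta_m(w)N_m\eta_m(w)^{-1} = \lVert w\rVert N_m$ holds, so that $\mathrm{Sp}(m)$ is tamely ramified. Given $\lambda \in P_n(\Gamma)$ and an $F$-orbit $f$ on which $\lambda$ is nonzero, choose $\gamma \in f \subseteq \Gamma_{d(f)}$; local class field theory (with the normalization $F \mapsto \varpi$) turns $\gamma$ into a tamely ramified character $\chi_\gamma$ of $W(\overline{\localField}/\localField_{d(f)})$, where $\localField_{d(f)}/\localField$ is unramified of degree $d(f)$, and we set $\sigma_f = \mathrm{Ind}_{W(\overline{\localField}/\localField_{d(f)})}^{W}\chi_\gamma$. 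This is an irreducible tamely ramified representation of degree $d(f)$, irreducibility being exactly the Mackey criterion, which holds because the $F$-orbit of $\gamma$ has $d(f)$ elements; moreover replacing $\gamma$ by another element $F^{j}\gamma$ of $f$ gives an isomorphic induced representation, so $\sigma_f$ depends only on $f$. Writing $\lambda(f) = (\lambda(f)_1 \ge \lambda(f)_2 \ge \cdots)$, put
\[
  \phi_\lambda \;=\; \bigoplus_{f}\ \bigoplus_{j=1}^{\mathfrak{n}(\lambda(f))} \sigma_f \otimes \mathrm{Sp}\bigl(\lambda(f)_j\bigr),
\]
a tamely ramified Weil--Deligne representation of degree $\sum_{f} d(f)\,\lvert\lambda(f)\rvert = \sum_{\gamma \in \Gamma}\lvert\lambda(\gamma)\rvert = n$. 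The claimed map is $\lambda \mapsto [\phi_\lambda]_I$.

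For the inverse, start from a tamely ramified $\phi = (\rho, N)$ of degree $n$. Since every $\rho(w)$ is semisimple and $\rho(P) = 1$, the restriction $\rho|_I$ factors through a finite quotient of the (pro-cyclic, hence abelian) tame quotient of $I$, so it is a direct sum of tamely ramified characters of $I$; identifying the group of such characters with $\Gamma$, we have $\rho|_I \cong \bigoplus_{\gamma}\gamma^{\oplus m_\gamma}$, and stability under conjugation by $F$ (which sends the $\gamma$-isotypic subspace $V[\gamma]$ to $V[F\gamma]$) forces $m_\gamma$ to be constant on $F$-orbits. As $\lVert i\rVert = 1$ for $i\in I$, the nilpotent $N$ commutes with $\rho(I)$ and therefore preserves each $V[\gamma]$; let $\lambda(\gamma)$ be the Jordan type of $N|_{V[\gamma]}$, a partition of $m_\gamma$. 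From $\rho(F)N\rho(F)^{-1} = \lVert F\rVert N = q^{-1}N$ together with the fact that $\rho(F)$ restricts to an isomorphism $V[\gamma] \xrightarrow{\sim} V[F\gamma]$, the Jordan type of $N$ on $V[F\gamma]$ equals that on $V[\gamma]$ (a nonzero scalar does not change Jordan type); hence $\lambda$ is constant on $F$-orbits, $\lambda \in P_n(\Gamma)$, and $\lambda$ plainly depends only on the $I$-equivalence class of $\phi$. This is the proposed inverse map $[\phi]_I \mapsto \lambda$.

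It remains to see that the two maps are mutually inverse. In one direction, a Mackey computation gives $\mathrm{Res}_I \sigma_f \cong \bigoplus_{\gamma \in f}\gamma$, while $\mathrm{Res}_I\mathrm{Sp}(m)$ is trivial with $N$ a single block of size $m$; hence in $\phi_\lambda$ the $\gamma$-isotypic subspace (for $\gamma$ in the support of $\lambda$) has dimension $\lvert\lambda(\gamma)\rvert$ and $N$-Jordan type $\lambda(\gamma)$, so $\phi_\lambda$ is sent back to $\lambda$. In the other direction, the $I$-equivalence class of $(\rho,N)$ is nothing but the isomorphism class of the pair $(\rho|_I, N)$; on the $\gamma$-isotypic piece this pair is $(\gamma\otimes\mathrm{id},\ \text{nilpotent of type }\lambda(\gamma))$, which is determined up to simultaneous conjugacy by $\lambda(\gamma)$, and the previous computation shows $\phi_\lambda$ carries exactly the same data. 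Therefore $\phi$ is $I$-equivalent to $\phi_\lambda$, and the maps are inverse. (This is consistent with \Cref{thm:explicit_epsilon0_factor_for_tame_representation}(1), which gives the complementary description of the irreducible tame constituents $\sigma_f$ as inductions of tame characters from unramified extensions.)

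The part that carries the real content, and the main potential obstacle, is the identification of the group of tamely ramified characters of $I$ with $\Gamma$ in a way compatible with the Frobenius actions --- namely $\gamma \mapsto \gamma^{q}$ on $\Gamma$ versus conjugation by $F$ on characters of $I$. This is supplied by local class field theory applied simultaneously to all unramified extensions $\localField_m/\localField$, together with the compatibility of the reciprocity maps with the norm maps $N_{n,m}$ (so that the directed system $\{\Gamma_m\}$ matches the system of character groups); once this dictionary and the normalization $F \mapsto \varpi$ are fixed, the remaining verifications (the Mackey computation, and the bookkeeping of the unramified twists that are invisible to $I$-equivalence) are routine. These arguments are carried out in \cite[Section 3]{Macdonald80}.
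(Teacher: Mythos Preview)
The paper does not supply its own proof of this statement; it is quoted from \cite{Macdonald80}, and the surrounding text (and the remark after \Cref{thm:explicit_epsilon0_factor_for_tame_representation_in_macdonald_parameterization}) explicitly defers the details to \cite[Section~3]{Macdonald80}. Your proposal is a correct and complete sketch of precisely the construction Macdonald gives there: build $\phi_\lambda$ as a direct sum of pieces $\sigma_f \otimes \mathrm{Sp}(m)$ with $\sigma_f$ induced from a tame character, and recover $\lambda$ from an arbitrary tamely ramified $(\rho,N)$ by recording the Jordan type of $N$ on each $I$-isotypic component of $V$. So there is nothing to compare --- your write-up \emph{is} the referenced proof.
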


Given $\lambda \in P_n(\Gamma)$, we use $\phi_\lambda = (\rho_\lambda, N_\lambda)$ to denote a representative in the corresponding $I$-equivalence class of tamely ramified Weil-Deligne representations. The underlying vector space of $\rho_\lambda$ is denoted by $V_\lambda$. If there is no ambiguity, we will use $\phi_\lambda$ to denote its $I$-equivalence class. If $\lambda$ is supported on a (single) degree $n$ $F$-orbit $f$, i.e., $\lambda(\gamma) = (1)$ for $\gamma \in f$ and $\lambda(\gamma) = ()$ otherwise, then we will sometimes use $\phi_f = (\rho_f, N_f)$ to mean $\phi_\lambda = (\rho_\lambda, N_\lambda)$. We will also use $V_f$ for $V_\lambda$ in this case.

With \Cref{thm:parameterization_of_tame_representations} and the notations introduced above, we can restate \Cref{thm:explicit_epsilon0_factor_for_tame_representation} as follows. See \cite[Section 3]{Macdonald80} for more details.

\begin{theorem}\label{thm:explicit_epsilon0_factor_for_tame_representation_in_macdonald_parameterization}
Let $\lambda \in P_n(\Gamma)$ and $\phi_\lambda = (\rho_\lambda, N_\lambda)$ be as above. 
    \begin{enumerate}[(1)]
        \item Let $(\rho_\lambda)_I$ be the restriction of $\rho_\lambda$ to $I$. Then
        $$(\rho_\lambda)_I = \sum_{\gamma \in \Gamma} |\lambda(\gamma)|\gamma = \sum_{f \in F\backslash \Gamma} |\lambda(f)|\sum_{\gamma \in f} \gamma,$$
        where $F \backslash \Gamma$ is the set of $F$-orbits in $\Gamma$, and the sums above are direct sums.

        \item Let $\psi$, $\psi_\finiteField$ and $dx$ be as in \Cref{thm:explicit_epsilon0_factor_for_tame_representation}. For an $F$-orbit $f$ in $\Gamma$, let $\psi_{d(f)} = \psi_\finiteField \circ \tr{d(f)}$ be the additive character of $\finiteField_{d(f)}$ defined by $\psi_\finiteField$. Then
        $$\epsilon_0(\phi_\lambda, \psi, dx) = (-1)^n q^{-\frac{n}{2}} \prod_{f} \tau(f, \psi_{d(f)})^{|\lambda(f)|}.$$
    \end{enumerate}
\end{theorem}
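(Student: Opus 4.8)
The statement is, essentially, a repackaging of \Cref{thm:explicit_epsilon0_factor_for_tame_representation} together with Macdonald's explicit construction of the representative $\phi_\lambda=(\rho_\lambda,N_\lambda)$ attached to $\lambda\in P_n(\Gamma)$; my plan is to assemble these, using \Cref{cor:epsilon0_factors_are_defined_on_I_equivence_classes} to legitimise the left-hand side (which a priori depends on a choice of representative of an $I$-equivalence class) and to reduce everything to the inertia restriction $(\rho_\lambda)_I$. First I would recall Macdonald's construction. For an $F$-orbit $f\subset\Gamma$ of degree $d=d(f)$ pick $\gamma_f\in f\subset\Gamma_d$, let $\localField_d\subset\localField^{un}$ be the unramified extension of degree $d$ with residue field $\finiteField_d$, and let $\chi_{\gamma_f}$ be the tamely ramified character of $W(\overline{\localField}/\localField_d)$ obtained from $\gamma_f$ via local class field theory normalised as in \Cref{thm:explicit_epsilon0_factor_for_tame_representation} (geometric Frobenius $\mapsto \varpi$): its restriction to $I$ factors through $\mathfrak{o}_{\localField_d}^\times\twoheadrightarrow\finiteField_d^\times$ and equals $\gamma_f$ there. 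For an integer $m$ let $\mathrm{Sp}(m)$ be the Weil--Deligne representation of dimension $m$ with $N$ a single Jordan block and $\rho$-part the unramified sum $\bigoplus_{j=0}^{m-1}\lVert\cdot\rVert^{(m-1)/2-j}$. Then, up to $I$-equivalence,
\[
\phi_\lambda=\bigoplus_{f\in F\backslash\Gamma}\ \bigoplus_{k}\ \mathrm{Ind}_{W(\overline{\localField}/\localField_{d(f)})}^{W}\bigl(\chi_{\gamma_f}\otimes\mathrm{Sp}(\lambda(f)_k)\bigr),
\]
where $\lambda(f)=(\lambda(f)_1,\lambda(f)_2,\dots)$.

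For part (1), I would compute $(\rho_\lambda)_I$ by Mackey's formula (Clifford theory). Since $\localField_{d}/\localField$ is unramified, $I\subseteq W(\overline{\localField}/\localField_{d})$ and $W=\coprod_{j=0}^{d-1}F^jW(\overline{\localField}/\localField_{d})$, so $\mathrm{Res}_I\,\mathrm{Ind}_{W(\overline{\localField}/\localField_{d})}^{W}\chi_{\gamma_f}=\bigoplus_{j=0}^{d-1}{}^{F^j}(\chi_{\gamma_f}|_I)=\bigoplus_{\gamma'\in f}\gamma'$, using that conjugation by $F$ acts on inertia characters as $\gamma\mapsto\gamma^q=F\gamma$. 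As $\mathrm{Sp}(m)|_I$ is trivial of dimension $m$, we get $\mathrm{Res}_I\,\mathrm{Ind}(\chi_{\gamma_f}\otimes\mathrm{Sp}(m))=m\sum_{\gamma'\in f}\gamma'$; summing over the parts of $\lambda(f)$ and over $f$ yields $(\rho_\lambda)_I=\sum_{f}|\lambda(f)|\sum_{\gamma'\in f}\gamma'=\sum_{\gamma\in\Gamma}|\lambda(\gamma)|\gamma$, and in particular $\dim V_\lambda=\sum_f d(f)|\lambda(f)|=\sum_{\gamma}|\lambda(\gamma)|=n$.

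For part (2), note that by \eqref{eqn:epsilon0_factor_of_weil_deligne_representation} we have $\epsilon_0(\phi_\lambda,\psi,dx)=\epsilon_0(\rho_\lambda,\psi,dx)$, and $\rho_\lambda$ is \emph{literally} a direct sum of induced tamely ramified characters, namely of $\mathrm{Ind}_{W(\overline{\localField}/\localField_{d(f)})}^{W}(\chi_{\gamma_f}\cdot\nu_j)$ with $\nu_j=\lVert\cdot\rVert^{(m-1)/2-j}$ unramified. Hence \Cref{thm:explicit_epsilon0_factor_for_tame_representation}(2) applies directly. Since $\nu_j$ is unramified it does not change the associated multiplicative character of the residue field $\finiteField_{d(f)}$, which is $\gamma_f$; and since $\tau(\gamma,\psi_{d(f)})$ is independent of the choice of $\gamma\in f$ (as verified before the statement), each induced summand contributes the factor $\tau(f,\psi_{d(f)})$. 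Counting summands, there are $\sum_k\lambda(f)_k=|\lambda(f)|$ of them per orbit $f$, so the product of Gauss sums is $\prod_f\tau(f,\psi_{d(f)})^{|\lambda(f)|}$, while the prefactor is $(-1)^{\dim V_\lambda}q^{-\dim V_\lambda/2}=(-1)^n q^{-n/2}$ by the dimension count above; this is the claimed formula.

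The only genuine work, and the place I expect the most care is needed, is the bookkeeping in the first paragraph: pinning down Macdonald's representative and its compatibility with the normalisations (geometric Frobenius $\leftrightarrow\varpi$, additive character of conductor $\mathfrak p$ $\leftrightarrow$ additive character of the residue field) so that $\widetilde{\chi_{\gamma_f}}$ on $\finiteField_{d(f)}$ really is $\gamma_f$, and checking that the twists $\nu_j$ introduced by the nilpotent part $N_\lambda$ are harmless for both $(\rho_\lambda)_I$ and the Gauss sums. Once this is in place, (1) is a one-line Mackey computation and (2) is a direct substitution into \Cref{thm:explicit_epsilon0_factor_for_tame_representation}.
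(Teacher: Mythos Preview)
Your proposal is correct. The paper does not actually give a proof of this theorem: it presents the statement as a direct restatement of \Cref{thm:explicit_epsilon0_factor_for_tame_representation} in Macdonald's parameterisation, and simply refers the reader to \cite[Section 3]{Macdonald80} for the construction of $\phi_\lambda$. What you have written is precisely the spelled-out argument that the paper leaves implicit --- Macdonald's explicit representative $\phi_\lambda$ as a sum of inductions twisted by special representations, the Mackey computation for part (1), and the observation that the unramified twists coming from $\mathrm{Sp}(m)$ do not affect the residue-field characters or the Gauss sums for part (2). So your approach coincides with the paper's (omitted) proof, only made explicit.
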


Let $\gamma \in \Gamma_n$ and let $f$ be the $F$-orbit of $\gamma$, i.e., $f$ is the set $\{\gamma, F\gamma, \cdots, F^{n-1}\gamma\}$ with duplicated elements removed. The size of $f$, or equivalently the degree of $f$, is not necessary $n$. Let $m = d(f)$, then $m | n$. In this case, $\gamma$ is actually an element in the subgroup $\Gamma_m \subset \Gamma_n$, where the embedding of $\Gamma_m$ into $\Gamma_n$ is given by the norm map $N_{n, m}: \finiteField_n \to \finiteField_m$. We introduce the notion of the \emph{$F$-set generated by $\gamma$ with respect to $\Gamma_n$} to be the (multi-)set $\{\gamma, F\gamma, \cdots, F^{n-1}\gamma\}$, possibly with duplicated elements. Let $h$ be the $F$-set generated by $\gamma$ with respect to $\Gamma_n$, then $h$ consists of $\frac{n}{m}$ copies of $f$. Since the elements of $h$ are in the same $F$-orbit, we can define the Gauss sum for $h$ to be
$$\tau(h, \psi_n) := \tau(\alpha, \psi_n),$$
for any $\alpha \in h$.

\begin{theorem}[Hasse-Davenport, {\cite[5.12]{Deligne73}}]\label{thm:hasse-davenport}
Let $m | n$ and let $\gamma \in \Gamma_m$ be a character of $\finiteField_m^\times$, such that its $F$-orbit $f$ is of degree $m$. Let $h$ be the $F$-set generated by $\gamma$ with respect to $\Gamma_n$. Then $\tau(h, \psi_n) = \tau(f, \psi_m)^{\frac{n}{m}}$.
\end{theorem}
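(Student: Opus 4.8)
The statement concerns a single Gauss sum. By construction the image of $\gamma$ in $\Gamma_n$ is the character $\widetilde{\gamma}\circ N_{n,m}$ of $\finiteField_n^\times$, where $\widetilde{\gamma}$ denotes $\gamma$ itself regarded as a character of $\finiteField_m^\times$, and $h$ is the $F$-orbit of this element; as all members of an $F$-orbit share the same Gauss sum with respect to $\psi_n$ (recorded just before the statement), $\tau(h,\psi_n)=\tau(\widetilde{\gamma}\circ N_{n,m},\psi_n)$. So the plan is to prove
\[
    \tau\bigl(\widetilde{\gamma}\circ N_{n,m},\,\psi_n\bigr)=\tau(\widetilde{\gamma},\,\psi_m)^{\,n/m}.
\]

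I would deduce this from Deligne's explicit formula, \Cref{thm:explicit_epsilon0_factor_for_tame_representation}, which is already at hand. Let $\localField_m\subset\localField_n$ be the unramified extensions of $\localField$ of degrees $m$ and $n$, with residue fields $\finiteField_m$ and $\finiteField_n$, set $W_m=W(\overline{\localField}/\localField_m)$ and $W_n=W(\overline{\localField}/\localField_n)$, and inflate $\widetilde{\gamma}$ to a tamely ramified character $\gamma$ of $\localField_m^\times$ whose associated character of $\finiteField_m^\times$ is $\widetilde{\gamma}$. By local class field theory the inclusion $W_n\hookrightarrow W_m$ is dual to the norm $\localField_n^\times\to\localField_m^\times$, so $\gamma|_{W_n}$ is the character attached to $\widetilde{\gamma}\circ N_{n,m}$. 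Writing induction in two stages and using the projection formula,
\[
    \mathrm{Ind}_{W_n}^{W}\bigl(\widetilde{\gamma}\circ N_{n,m}\bigr)\;\cong\;\bigoplus_{j=0}^{n/m-1}\mathrm{Ind}_{W_m}^{W}\bigl(\gamma\,\eta^{j}\bigr),
\]
where $\eta$ generates the cyclic group of characters of $W_m$ trivial on $W_n$; these are unramified, so each $\gamma\eta^{j}$ is tamely ramified with associated character $\widetilde{\gamma}$ of $\finiteField_m^\times$. Evaluating the $\epsilon_0$-factor of this $n$-dimensional representation via \Cref{thm:explicit_epsilon0_factor_for_tame_representation}: read as a single induced tame character it equals $(-1)^{n}q^{-n/2}\,\tau(\widetilde{\gamma}\circ N_{n,m},\psi_n)$, and read via the displayed sum it equals $(-1)^{n}q^{-n/2}\prod_{j=0}^{n/m-1}\tau(\widetilde{\gamma},\psi_m)=(-1)^{n}q^{-n/2}\,\tau(\widetilde{\gamma},\psi_m)^{n/m}$; cancelling the common factor $(-1)^{n}q^{-n/2}$ yields the identity, hence the theorem.

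On this route there is essentially no obstacle: all the analytic substance is already packaged inside \Cref{thm:explicit_epsilon0_factor_for_tame_representation}, and the only fresh inputs are the two-stage decomposition of an induced representation and the remark that an unramified twist is invisible to the associated finite-field character. If instead a self-contained argument is wanted, one can first reduce to $m=1$ (repeating everything with $\finiteField$ replaced by $\finiteField_m$, using transitivity of norms and traces) and then run the classical generating-function proof of the Hasse--Davenport lifting relation: attach to each monic $f\in\finiteField[X]$ with roots $\alpha_1,\dots,\alpha_d\in\overline{\finiteField}$ the multiplicative weight $\lambda(f)=\widetilde{\gamma}^{-1}(\alpha_1\cdots\alpha_d)\,\psi_\finiteField(\alpha_1+\cdots+\alpha_d)$ (a well-defined function of the coefficients of $f$), evaluate $Z(t)=\sum_f\lambda(f)t^{\deg f}$ both as the Euler product $\prod_P(1-\lambda(P)t^{\deg P})^{-1}$ over monic irreducibles and — summing over coefficients — as $1+g\,t$ with $g=\sum_{x\in\finiteField^\times}\widetilde{\gamma}^{-1}(x)\psi_\finiteField(x)$, and compare coefficients of $t^{n}$ in $t\,\frac{d}{dt}\log Z(t)$; one then matches the resulting sum $\sum_{d\mid n}d\sum_{\deg P=d}\lambda(P)^{n/d}$ with $\sum_{\alpha\in\finiteField_n^\times}\widetilde{\gamma}^{-1}(N_{n,1}\alpha)\,\psi_\finiteField(\mathrm{Tr}_{\finiteField_n/\finiteField}\alpha)$ by pairing monic irreducibles of degree $d\mid n$ with Frobenius orbits in $\finiteField_n^\times$ and using $N_{\finiteField_n/\finiteField_d}(\alpha)=\alpha^{n/d}$ and $\mathrm{Tr}_{\finiteField_n/\finiteField_d}(\alpha)=(n/d)\alpha$ for $\alpha\in\finiteField_d$. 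Here the genuinely delicate step is this last identification — it is exactly where the exponent $n/m$ is produced — while everything else is formal power-series manipulation together with bookkeeping relating $\tau$ to the unnormalized Gauss sum $g$.
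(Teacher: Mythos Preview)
The paper does not give its own proof of this result; it is quoted directly from \cite[5.12]{Deligne73} and used as a black box elsewhere. So there is no in-paper argument to compare against, only the cited source.

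Your first route is correct and is, in essence, Deligne's own derivation in that reference: once $\epsilon_0$ exists and is an invariant of the representation (not of a chosen decomposition into induced characters), evaluating $\epsilon_0\bigl(\mathrm{Ind}_{W_n}^{W}(\gamma|_{W_n})\bigr)$ via \Cref{thm:explicit_epsilon0_factor_for_tame_representation} in two ways --- once as a single induction from $W_n$, once as the sum $\bigoplus_j\mathrm{Ind}_{W_m}^{W}(\gamma\eta^j)$ with each $\eta^j$ unramified --- forces the identity. The one point worth making explicit is that this is not circular: the formula of \Cref{thm:explicit_epsilon0_factor_for_tame_representation} for a \emph{single} induced tame character follows from inductivity of $\epsilon_0$ together with the degree-one (Tate) computation, with no appeal to Hasse--Davenport; what the Hasse--Davenport relation records is precisely the consistency of that formula across two different decompositions of the same representation. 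Your alternative self-contained route via the Euler product for $Z(t)=1+gt$ is the classical Weil argument and is also sound; the step you single out --- matching Frobenius orbits in $\finiteField_n^\times$ with monic irreducibles over $\finiteField$ of degree dividing $n$ --- is indeed where the exponent $n/m$ is produced. (One terminological nit: in your opening paragraph $h$ is the $F$-\emph{set} with multiplicity, not the $F$-orbit, but since $\tau(h,\psi_n)$ is defined as the Gauss sum of any member this does not affect the argument.)
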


Let
$$G = \mathrm{Gal}(\overline{\localField}/\localField) \supset G_0 = I \supset G_1 = P \supset G_2 \supset \cdots \supset G_N = 1$$
be the filtration of the ramification subgroups \cite{Serre79}. For a Weil-Deligne representation $\phi = (\rho, N)$, where $\rho: W \to \G(V)$ for some complex vector space $V$, we define its \emph{Artin conductor} $a(\phi)$ by
$$a(\phi) = \sum_{j = 0}^N \dim \left(V/V^{G_j}\right) \frac{|\rho(G_j)|}{|\rho(G_0)|},$$
where $V^{G_j}$ is the subspace of $\rho(G_j)$-fixed vectors in $V$. We note that by definition, $\ker \rho$ contains an open subgroup of $G_0 = I$, and since $I$ is compact, the image $\rho(G_0)$ of $G_0$ is finite. Hence so are the other $\rho(G_j)$ for $j = 1, \dots, N$, as $G_j \subset G_0$. Isolating the $j=0$ term in $a(\phi)$, we can write
$$a(\phi) = \dim\left( V/V^I \right) + b(\phi),$$
where $b(\phi)$ is called the \emph{Swan conductor} of $\phi$. Deligne defines $\epsilon(s, \phi, \psi, dx)$, which is an indispensable ingredient in the local Langlands correspondence, by the formula
$$\epsilon(s, \phi, \psi, dx) = \epsilon(\phi, \psi, dx)q^{\left(\dim V - a(\phi)\right)s}.$$
See \cite[5.5 and 8.18]{Deligne73} for details, and note that $n(\psi) = -1$ for our choice of $\psi$.

Let $\lambda \in P_n(\Gamma)$ and $\phi_\lambda = (\rho_\lambda, N_\lambda)$ be the tamely ramified Weil-Deligne representation under the bijection in \Cref{thm:parameterization_of_tame_representations}. Since $\rho_\lambda(P)$ acts trivially on $V_\lambda$, we have $\dim \left( V_\lambda / V_\lambda^P \right) = 0$, so $b(\phi_\lambda) = 0$. Thus in this case, $a(\phi) = \dim \left( V_\lambda / V_\lambda^I \right) = \dim V_\lambda - \dim V_\lambda^I$. Therefore,
\begin{equation}\label{eqn:arithmetic_epsilon_of_tame_representation}
    \epsilon(s, \phi_\lambda, \psi, dx) =\epsilon(\phi_\lambda, \psi, dx)q^{\left( \dim V_\lambda^I \right)s} = \epsilon(\phi_\lambda, \psi, dx)q^{|\lambda(1)|s},
\end{equation}
where $\lambda(1)$ is the partition of $\lambda$ evaluated at the trivial character $1$ of $\finiteField^\times$.

Finally, we want to discuss when $\epsilon(\phi_\lambda, \psi, dx) = \epsilon_0(\phi_\lambda, \psi, dx)$. From \Cref{eqn:epsilon0_factor_of_weil_deligne_representation} and \Cref{eqn:epsilon_factor_of_weil_deligne_representation}, we have
$$\epsilon(\phi_\lambda, \psi, dx) = \epsilon_0(\phi_\lambda, \psi, dx)\det\left(-F, V_{N_\lambda}^I\right)^{-1}.$$
Thus, we want to know when $\det\left(-F, (V_\lambda)_{N_\lambda}^I\right)$ equals $1$. An obvious sufficient condition is $V_\lambda^I = 0$.

\begin{proposition}\label{prop:sufficient_condition_for_I_invariant_subspace_to_vanish}
    Let $\lambda \in P_n(\Gamma)$ and let $\phi_\lambda = (\rho_\lambda, N_\lambda)$ be the corresponding $I$-equivalence class of a tamely ramified Weil-Deligne representation. Then $V_\lambda^I = 0$ if and only if $\lambda(1) = ()$.
\end{proposition}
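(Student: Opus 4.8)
The plan is to use the explicit description of $(\rho_\lambda)_I$ from \Cref{thm:explicit_epsilon0_factor_for_tame_representation_in_macdonald_parameterization}(1) and reduce the statement about $V_\lambda^I$ to a statement about fixed vectors under a direct sum of characters. Concretely, as $I$-representations we have $(\rho_\lambda)_I = \bigoplus_{\gamma \in \Gamma} |\lambda(\gamma)| \gamma$, where each $\gamma$ is viewed as a one-dimensional representation of $I$ factoring through the map $I \twoheadrightarrow \Gamma_{d(f)}^\times$-side of local class field theory (more precisely, through the tame quotient $I/P \cong \varprojlim \overline{\finiteField}^\times$, which pairs with $\Gamma$). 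The $I$-invariants of a one-dimensional representation given by a character $\gamma$ of $I$ are all of $V$ if $\gamma$ is the trivial character of $I$, and $0$ otherwise.

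So the first step is to identify precisely which $\gamma \in \Gamma$ give the trivial character of $I$. Since $\Gamma = \varinjlim \Gamma_n$ with $\Gamma_n$ the character group of $\finiteField_n^\times$, and the action of $I$ on $V_\lambda$ is through the tame quotient in a way dual to this identification, a character $\gamma \in \Gamma$ acts trivially on $I$ precisely when $\gamma$ is the trivial character $1 \in \Gamma_1 \subset \Gamma$. (This is exactly the compatibility built into Macdonald's parameterization: the restriction to $I$ only remembers the $\Gamma$-data, and $\gamma$ contributes nontrivially to $I$ unless it is the trivial character of $\finiteField^\times$.) The second step is then purely bookkeeping:
\begin{equation*}
V_\lambda^I = \bigoplus_{\gamma \in \Gamma} |\lambda(\gamma)| \, \gamma^I = |\lambda(1)| \cdot (\text{the line for } \gamma = 1),
\end{equation*}
so $\dim V_\lambda^I = |\lambda(1)|$, which is $0$ if and only if $\lambda(1) = ()$ (recall $|\lambda(\gamma)| \geq 0$ with equality iff $\lambda(\gamma)$ is the empty partition). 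This gives both directions of the equivalence at once.

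The main obstacle I anticipate is making the claim ``$\gamma$ acts trivially on $I$ iff $\gamma = 1$'' airtight without re-deriving Macdonald's construction. One must pin down the normalization: in the parameterization, $\phi_\lambda$ restricted to $I$ is built so that its character decomposition matches $\sum_\gamma |\lambda(\gamma)| \gamma$ as characters of $I$, where $\Gamma$ is identified with the group of characters of $I$ trivial on $P$ that have finite order, via $\mathrm{Gal}(\overline{\finiteField}/\finiteField)$-theory; under this identification the trivial character of $\finiteField^\times$ corresponds to the trivial character of $I/P$. I would cite \cite[Section 3]{Macdonald80} and \Cref{thm:explicit_epsilon0_factor_for_tame_representation} for this, perhaps noting that an induced representation $\mathrm{Ind}_{W(\overline{\localField}/\localField_i)}^W \chi_i$ has nonzero $I$-invariants exactly when $\chi_i$ is unramified, i.e. when $\widetilde{\chi}_i$ is the trivial character of $\finiteField_i^\times$, and that this happens within the $\gamma = 1$ part of the data. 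Once that identification is granted, the rest is immediate from the direct sum decomposition and the observation that invariants commute with finite direct sums.
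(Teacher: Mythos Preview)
Your proposal is correct and follows essentially the same approach as the paper: both use the decomposition $(\rho_\lambda)_I = \sum_{\gamma \in \Gamma} |\lambda(\gamma)|\gamma$ from \Cref{thm:explicit_epsilon0_factor_for_tame_representation_in_macdonald_parameterization} and observe that the only summand contributing to $V_\lambda^I$ is the one with $\gamma = 1$. The paper's proof is simply terser, taking for granted the identification (which you spell out) that $\gamma \in \Gamma$ acts trivially on $I$ if and only if $\gamma = 1$.
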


\begin{proof}
    Since we only concentrate on the $I$-fixed subspace, it makes sense to work with $I$-equivalence classes directly. By \Cref{thm:explicit_epsilon0_factor_for_tame_representation_in_macdonald_parameterization},
    $$(\rho_\lambda)_I = \sum_{\gamma \in \Gamma} |\lambda(\gamma)|\gamma.$$
    Therefore, $V_\lambda$ has a non-trivial $I$-fixed vector if and only if $\lambda(1) \ne ()$.
\end{proof}

\begin{corollary}\label{cor:sufficient_condition_for_I_invariant_subspace_to_vanish}
    Let $\lambda \in P_n(\Gamma)$ be such that $\lambda(1) = ()$, i.e., $\lambda$ is not supported on $1$.Then
    $$\displaystyle \epsilon(s, \phi_\lambda, \psi, dx) = \epsilon(\phi_\lambda, \psi, dx) = \epsilon_0(\phi_\lambda, \psi, dx).$$
\end{corollary}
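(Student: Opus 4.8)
The plan is to obtain both equalities as a direct consequence of \Cref{prop:sufficient_condition_for_I_invariant_subspace_to_vanish} together with the defining formulas \eqref{eqn:epsilon0_factor_of_weil_deligne_representation}, \eqref{eqn:epsilon_factor_of_weil_deligne_representation} and \eqref{eqn:arithmetic_epsilon_of_tame_representation}. First I would invoke \Cref{prop:sufficient_condition_for_I_invariant_subspace_to_vanish}: the hypothesis $\lambda(1) = ()$ forces $V_\lambda^I = 0$. Since $(V_\lambda)_{N_\lambda}^I$, the null space of $N_\lambda$ acting on $V_\lambda^I$, is a subspace of $V_\lambda^I$, it is likewise $0$. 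Consequently $\det\left(-F, (V_\lambda)_{N_\lambda}^I\right)$ is the determinant of an endomorphism of the zero vector space, which is the empty product $1$.

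Next I would feed this into the relation between the two epsilon factors of a Weil–Deligne representation. Combining \eqref{eqn:epsilon0_factor_of_weil_deligne_representation} and \eqref{eqn:epsilon_factor_of_weil_deligne_representation} for $\phi_\lambda = (\rho_\lambda, N_\lambda)$ gives $\epsilon(\phi_\lambda, \psi, dx) = \epsilon_0(\phi_\lambda, \psi, dx)\det\left(-F, (V_\lambda)_{N_\lambda}^I\right)^{-1}$, and the determinant factor is $1$ by the previous step, so $\epsilon(\phi_\lambda, \psi, dx) = \epsilon_0(\phi_\lambda, \psi, dx)$. For the remaining equality, I would quote \eqref{eqn:arithmetic_epsilon_of_tame_representation}, namely $\epsilon(s, \phi_\lambda, \psi, dx) = \epsilon(\phi_\lambda, \psi, dx)\, q^{|\lambda(1)|s}$; since $\lambda(1) = ()$ we have $|\lambda(1)| = 0$, so the $q$-power is $1$ and $\epsilon(s, \phi_\lambda, \psi, dx) = \epsilon(\phi_\lambda, \psi, dx)$. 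Chaining the two identities yields the claim.

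I do not expect any genuine obstacle: the statement is a short bookkeeping argument once \Cref{prop:sufficient_condition_for_I_invariant_subspace_to_vanish} is in hand. The only points meriting a moment's care are the convention that the determinant over a zero-dimensional space is $1$, and the fact — used implicitly in \eqref{eqn:arithmetic_epsilon_of_tame_representation} — that $\dim V_\lambda^I = |\lambda(1)|$, which is part (1) of \Cref{thm:explicit_epsilon0_factor_for_tame_representation_in_macdonald_parameterization} evaluated at the trivial character, together with $b(\phi_\lambda) = 0$ for tamely ramified $\phi_\lambda$. One could optionally remark that the hypothesis $\lambda(1) = ()$ is exactly the condition $V_\lambda^I = 0$, so the corollary is sharp in the sense that this is the natural range of validity for the "$\epsilon = \epsilon_0$" identity coming from \eqref{eqn:epsilon_factor_of_weil_deligne_representation}.
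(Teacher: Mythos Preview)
Your proposal is correct and follows exactly the same approach as the paper's proof: invoke \Cref{prop:sufficient_condition_for_I_invariant_subspace_to_vanish} to get $V_\lambda^I=0$, deduce $(V_\lambda)_{N_\lambda}^I=0$ so that $\det(-F,(V_\lambda)_{N_\lambda}^I)=1$, and then read off both equalities from \eqref{eqn:arithmetic_epsilon_of_tame_representation} and the relation between $\epsilon$ and $\epsilon_0$. The only difference is cosmetic ordering; the paper handles the first equality first and the second afterward.
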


\begin{proof}
    The first equality comes from \Cref{eqn:arithmetic_epsilon_of_tame_representation} and $\lambda(1) = ()$. Since $V_\lambda^I = 0$ by \Cref{prop:sufficient_condition_for_I_invariant_subspace_to_vanish}, $(V_\lambda)_{N_\lambda}^I=0$, so $\det \left( -F, (V_\lambda)_{N_\lambda}^I \right) = 1$. Then the second equality follows.
\end{proof}

\section{Macdonald's correspondence and epsilon factors}\label{sec:macdonald_correspondence_and_epsilon_factors}

From now on, we fix an additive character $\psi$ on $\localField$ of conductor $\mathfrak{p}$, and let $dx$ be the Haar measure on $\localField$ such that $\int_\mathfrak{p} \, dx = q^{-\frac{1}{2}}$. $dx$ is self dual with respect to $\psi$. We also use $\psi$ to denote the additive character on $\finiteField$ defined by $\psi$ via the isomorphism $\finiteField \cong \mathfrak{o} / \mathfrak{p}$. Let $\finiteField_n$ be the (unique) field extension of $\finiteField$ of degree $n$. Let $\psi_n$ be the additive character on $\finiteField_n$ defined by $\psi_n = \psi \circ \tr{n}$.

Let $\Pi(\G_n(\finiteField))$  be the set of equivalence classes of irreducible representations of $\G_n(\finiteField)$. Green \cite{Green55} established a natural bijection between $\Pi(\G_n(\finiteField))$ and $P_n(\Gamma)$, see also \cite[Section 1]{Macdonald80}. If $\lambda \in P_n(\Gamma)$, we denote by $\pi_\lambda \in \Pi(\G_n(\finiteField))$ the corresponding (equivalence class of the) irreducible representation. If $\lambda$ is supported only on a (single) orbit $f$ of degree $n$, we will also write $\pi_f$ for $\pi_\lambda$.

In light of Green's correspondence and \Cref{thm:parameterization_of_tame_representations}, Macdonald obtained the following

\begin{theorem}[Macdonald's correspondence, \cite{Macdonald80}]\label{thm:macondald_correspondence}
    There is a canonical bijection
    $$\mathcal{M}: \Pi(\G_n(\finiteField)) \to \Phi^t_I(\G_n)$$
    such that $\mathcal{M}(\pi_\lambda) = \phi_\lambda$ for $\lambda \in P_n(\Gamma)$ and
    \begin{equation}\label{eqn:equality_of_epsilon_factors_under_macdonald_correspondence}
        \epsilon(\pi_\lambda, \psi) = \epsilon_0(\phi_\lambda, \psi, dx),
    \end{equation}
    where $\epsilon(\pi, \psi)$ is the Godement-Jacquet epsilon factor defined in \cite[Section 2]{Macdonald80}.
\end{theorem}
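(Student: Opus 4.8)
The plan is to build $\mathcal{M}$ from the bijections already in hand and then verify the epsilon identity one $F$-orbit at a time. For the map itself, Green's parameterization \cite{Green55} is a canonical bijection $\Pi(\G_n(\finiteField)) \xrightarrow{\sim} P_n(\Gamma)$ and \Cref{thm:parameterization_of_tame_representations} is a canonical bijection $P_n(\Gamma) \xrightarrow{\sim} \Phi^t_I(\G_n)$; I would take $\mathcal{M}$ to be their composite, so that $\mathcal{M}(\pi_\lambda) = \phi_\lambda$ holds by definition and bijectivity is inherited for free. The only thing to remark is that ``canonical'' survives composition, i.e.\ that neither constituent bijection depends on a choice beyond the data ($\psi$, $\varpi$, the identification $\finiteField \cong \mathfrak{o}/\mathfrak{p}$) already fixed, which is clear from the constructions.

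For $\epsilon(\pi_\lambda, \psi) = \epsilon_0(\phi_\lambda, \psi, dx)$, the right-hand side is already in closed form: by \Cref{thm:explicit_epsilon0_factor_for_tame_representation_in_macdonald_parameterization}(2) it equals $(-1)^n q^{-n/2}\prod_f \tau(f, \psi_{d(f)})^{|\lambda(f)|}$, the product over $F$-orbits $f$ in $\Gamma$. Hence everything comes down to evaluating the Godement--Jacquet epsilon factor $\epsilon(\pi_\lambda,\psi)$ of \cite[Section 2]{Macdonald80}, which is the constant (independent of the auxiliary data) in the functional equation --- under the finite Fourier transform attached to $x\mapsto\psi(\mathrm{Tr}\,x)$ on $M_n(\finiteField)$ --- of the finite-field zeta sum $\sum_{g\in\G_n(\finiteField)}\Phi(g)\,\beta(g)$ built from a function $\Phi$ on $M_n(\finiteField)$ and a matrix coefficient $\beta$ of $\pi_\lambda$.

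The computation then proceeds in three steps. \emph{Multiplicativity:} if $\lambda = \mu + \nu$ then $\pi_\lambda$ occurs in the parabolic induction of $\pi_\mu\boxtimes\pi_\nu$ from the Levi $\G_{|\mu|}\times\G_{|\nu|}$, and the zeta sum factors compatibly through this parabolic, giving $\epsilon(\pi_\lambda,\psi) = \epsilon(\pi_\mu,\psi)\,\epsilon(\pi_\nu,\psi)$ --- the analog of the identity $\epsilon_0(\phi\oplus\phi',\psi,dx)=\epsilon_0(\phi,\psi,dx)\,\epsilon_0(\phi',\psi,dx)$ visible from the explicit formula --- which iterates to split $\lambda$ into pieces each supported on a single orbit $f$ with a one-part partition $(m)$. \emph{Reduction to cuspidals:} such a piece $\pi_{(m)\text{ on }f}$ is a constituent of the parabolic induction of $m$ copies of the cuspidal $\pi_f$, and one proves that the finite Godement--Jacquet epsilon factor depends only on the cuspidal support of $\pi$ --- the representation-side shadow of the fact (\Cref{cor:epsilon0_factors_are_defined_on_I_equivence_classes} with \Cref{thm:explicit_epsilon0_factor_for_tame_representation_in_macdonald_parameterization}(1)) that $\epsilon_0$ depends only on $(\rho_\lambda)_I=\sum_\gamma|\lambda(\gamma)|\gamma$ --- whence $\epsilon(\pi_{(m)\text{ on }f},\psi)=\epsilon(\pi_f,\psi)^m$. \emph{The cuspidal computation:} for $f$ of degree $d$ one must show $\epsilon(\pi_f,\psi) = (-1)^d q^{-d/2}\,\tau(f,\psi_d)$, and here I would use a concrete description of $\pi_f$ (Green's cuspidal character formula, or the Gelfand--Graev/Whittaker model), pick a matrix coefficient adapted to the elliptic torus $\finiteField_d^\times\hookrightarrow\G_d(\finiteField)$ together with a $\Phi$ supported near it, and evaluate both sides of the functional equation: the zeta sum collapses, up to the normalizing power of $q$, to $\sum_{x\in\finiteField_d^\times}\widetilde\chi(x)\psi_d(x)$, where $\widetilde\chi$ is the character of $\finiteField_d^\times$ attached to $f$, which is $\tau(f,\psi_d)$ up to sign. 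Combining, $\epsilon(\pi_\lambda,\psi)=\prod_f \epsilon(\pi_f,\psi)^{|\lambda(f)|}=(-1)^n q^{-n/2}\prod_f\tau(f,\psi_{d(f)})^{|\lambda(f)|}$ because $\sum_f d(f)|\lambda(f)|=n$, which is exactly $\epsilon_0(\phi_\lambda,\psi,dx)$.

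The main obstacle is the cuspidal computation, and secondarily the multiplicativity lemma: passing from the abstract functional-equation definition of $\epsilon(\pi_f,\psi)$ to the explicit Gauss sum requires a genuinely concrete grip on Green's cuspidal representation and on how its parameter $\widetilde\chi$ corresponds to the orbit $f$, and it is where the normalizations $(-1)^d$ and $q^{-d/2}$ get pinned down; the bijection and the remaining bookkeeping (signs, powers of $q$) are formal. All of this is of course due to Macdonald \cite{Macdonald80}; the above is how one would reconstruct the argument from the results recalled here.
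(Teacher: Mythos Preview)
The paper does not give a proof of this theorem: it is stated as a result of Macdonald \cite{Macdonald80} and immediately used as the basis for \Cref{defn:epsilon}, with no argument supplied. So there is nothing in the paper to compare your proposal against.

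That said, your reconstruction is a faithful outline of Macdonald's own argument: define $\mathcal{M}$ as the composite of Green's parameterization with \Cref{thm:parameterization_of_tame_representations}, then verify \eqref{eqn:equality_of_epsilon_factors_under_macdonald_correspondence} by reducing via multiplicativity and dependence on cuspidal support to the cuspidal case, where the Godement--Jacquet functional equation collapses to a single Gauss sum. You correctly identify the cuspidal computation as the crux. One small caution: your ``reduction to cuspidals'' step asserts that the finite Godement--Jacquet epsilon depends only on cuspidal support, which is true but is itself a nontrivial lemma (it does not follow formally from \Cref{cor:epsilon0_factors_are_defined_on_I_equivence_classes}, which is a statement about the Galois side); Macdonald handles this by working directly with the zeta function of $\pi_\lambda$ expressed through Hall--Littlewood symmetric functions, rather than by an abstract cuspidal-support argument. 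Your sketch is correct in spirit, but if you were to write out a full proof you would need to supply this step carefully.
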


Macdonald's correspondence $\mathcal{M}$ and \Cref{eqn:equality_of_epsilon_factors_under_macdonald_correspondence} inspire the following definition of epsilon factors:

\begin{definition}\label{defn:epsilon}
    Let $\pi \in \Pi(\G_n(\finiteField))$, and let $r$ be an operation on Weil-Deligne representations of $W$ that preserves tame ramification, for example $r$ can be direct sum, tensor product, exterior power or symmetric power. We define a non-zero constant $\epsilon_0(\pi, r, \psi)$ associated to the pair $(\pi, r)$ by
    \begin{equation*}
        \epsilon_0(\pi, r, \psi) := \epsilon_0(r\left(\mathcal{M}(\pi)\right), \psi, dx),
    \end{equation*}
\end{definition}

\begin{remark}
    \begin{enumerate}[(1)]
        \item There is exactly one Haar measure $dx$ on $\localField$ such that $\int_\mathfrak{p} \, dx = q^{-\frac{1}{2}}$, but there are $q-1$ different additive characters of conductor $\mathfrak{p}$ on $\localField$. Therefore, we keep $\psi$ and omit $dx$ in the notation $\epsilon_0(\pi, r, \psi)$, though we have fixed $\psi$ and $dx$ at the beginning of the section. One should keep in mind that the definition of $\epsilon_0(\pi, r, \psi)$ depends on the choices of both $\psi$ and $dx$.

        \item If $r = id$ is the identity operation, then \Cref{eqn:equality_of_epsilon_factors_under_macdonald_correspondence} is the same as
        $$\epsilon(\pi, \psi) = \epsilon_0(\pi, id, \psi),$$
        where $\epsilon(\pi, \psi)$ is the Godement-Jacquet epsilon factor of $\pi$.

        \item If $r$ is either the direct sum or the tensor product, then it is a binary operator, so the corresponding epsilon factors should receive as input two representations. For example, when $r = \oplus$ is the direct sum, it should be understood that the epsilon factors are defined for two representations $\pi_1 \in \Pi(\G_n(\finiteField))$ and $\pi_2 \in \Pi(\G_m(\finiteField))$ and we write
        $$\epsilon_0(\pi_1 \boxplus \pi_2, \psi) := \epsilon_0(\pi_1, \pi_2, \oplus, \psi) = \epsilon_0(\mathcal{M}(\pi_1) \oplus \mathcal{M}(\pi_2), \psi).$$
        And for $r = \otimes$ the tensor product, we write
        $$\epsilon_0(\pi_1 \times \pi_2, \psi) := \epsilon_0(\pi_1, \pi_2, \otimes, \psi) = \epsilon_0(\mathcal{M}(\pi_1) \otimes \mathcal{M}(\pi_2), \psi).$$

        \item If $r$ is one of these four operations listed in \Cref{defn:epsilon}, then it preserves tame ramification. That is to say, if $\phi$ is tamely ramified Weil-Deligne representation of $W$ (or a pair of tamely ramified Weil-Deligne representations when $r$ is the direct sum or the tensor product), then so is $r(\phi)$.
    \end{enumerate}
\end{remark}

It follows immediately from the definition of $\epsilon_0(\pi, r, \psi)$ that epsilon factors of irreducible representations of general linear groups over $\finiteField$ enjoy the same good properties as those of epsilon factors of tamely ramified Weil-Deligne representations over $\localField$. For example, they satisfy multiplicativity \cite[(5.2)]{Deligne73}, and they can be written as products of Gauss sums by \Cref{thm:explicit_epsilon0_factor_for_tame_representation} or \Cref{thm:explicit_epsilon0_factor_for_tame_representation_in_macdonald_parameterization}. To illustrate these properties, we will look at the case where $r$ is the identity operator for the rest of this section. For simplicity of notations, we define
$$\epsilon_0(\pi, \psi) := \epsilon_0(\pi, id, \psi) = \epsilon_0(\mathcal{M}(\pi), \psi).$$

Let $\lambda \in P_n(\Gamma)$ and $\mu \in P_m(\Gamma)$. Then by multiplicativity of $\epsilon_0$-factors of Weil-Deligne representations,
\begin{equation*}
    \epsilon_0(\pi_\lambda \boxplus \pi_\mu, \psi)
    = \epsilon_0(\phi_\lambda \oplus \phi_\mu, \psi)
    = \epsilon_0(\phi_\lambda, \psi)\epsilon_0(\phi_\mu, \psi)
    = \epsilon_0(\pi_\lambda, \psi)\epsilon_0(\pi_\mu, \psi).
\end{equation*}
This is the multiplicativity property for $\epsilon_0$-factors of irreducible representations of general linear groups over $\finiteField$. On the other hand, from \Cref{thm:explicit_epsilon0_factor_for_tame_representation_in_macdonald_parameterization} and \Cref{defn:epsilon}, we can express $\epsilon_0(\pi_\lambda, \psi)$ as a product of Gauss sums:
\begin{equation*}
    \epsilon_0(\pi_\lambda, \psi) = \epsilon_0(\phi_\lambda, \psi) = (-1)^n q^{-\frac{n}{2}} \prod_{f} \tau(f, \psi_{d(f)})^{|\lambda(f)|}.
\end{equation*}

There are two ways to express $\epsilon_0(\pi_\lambda \boxplus \pi_\mu, \psi)$ as a product of Gauss sums. The first way is to use the multiplicativity of $\epsilon_0$-factors:
\begin{equation*}
    \begin{split}
        \epsilon_0(\pi_\lambda \boxplus \pi_\mu, \psi)
        &= \epsilon_0(\pi_\lambda, \psi)\epsilon_0(\pi_\mu, \psi) \\
        &= (-1)^{n+m} q^{-\frac{n+m}{2}} \prod_f \tau(f, \psi_{d(f)})^{|\lambda(f)|+|\mu(f)|}.
    \end{split}
\end{equation*}
On the other hand, by the explicit correspondence between $P_n(\Gamma)$ and $\Phi^t_I(\G_n)$ as in \cite[Section 3]{Macdonald80}, we see that $\phi_\lambda \oplus \phi_\mu$ is a tamely ramified representation of degree $n+m$ corresponding to $\nu = \lambda + \mu \in P_{n+m}(\Gamma)$. Here, we have for $\gamma \in \Gamma$,
$$\nu(\gamma) = \lambda(\gamma) + \mu(\gamma),$$
where addition above is the concatenation of two partitions. Thus,
\begin{equation*}
    \epsilon_0(\pi_\lambda \boxplus \pi_\mu, \psi)
    = \epsilon_0(\phi_\nu, \psi)
    = (-1)^{n+m} q^{-\frac{n+m}{2}} \prod_f \tau(f, \psi_{d(f)})^{|\nu(f)|}.
\end{equation*}
These two ways agree because $|\nu(f)| = |\lambda(f)| + |\mu(f)|$. Moreover, since $\phi_\nu = \phi_\lambda \oplus \phi_\mu$, we can define the addition $\pi_\lambda \boxplus \pi_\mu$ to be $\pi_\nu$.

\section{Tensor product epsilon factors}\label{sec:tensor_epsilon_factors}

This section is devoted to the tensor product epsilon factors $\epsilon_0(\pi_\lambda \times \pi_\mu, \psi)$. First of all, we show that they are equal to products of Gauss sums. Secondly, we relate them to the Rankin-Selberg gamma factors defined in \cite{Roditty10,Nien14,Ye18}. Their relation to the Rankin-Selberg gamma factors can be used to solve a conjecture made by Nien and Zhang \cite[Conjecture 2.2]{NienZhang18}.

\subsection{Gauss sums}
In terms of Green's classification, irreducible \emph{cuspidal} representations of $\G_n(\finiteField)$ are in a one-to-one correspondence with $F$-orbits in $\Gamma$ of degree $n$. In other words, if $\pi$ is an irreducible cuspidal representation of $\G_n(\finiteField)$, then it corresponds to $\lambda \in P_n(\Gamma)$ supported on a (single) degree $n$ $F$-orbit $f$, i.e., $\lambda(\gamma) = (1)$ if $\gamma \in f$ and $\lambda(\gamma) = ()$ otherwise. In this case, we will use $\pi_f$ instead of $\pi_\lambda$, and we will define $\phi_f$ and $\rho_f$ by $\mathcal{M}(\pi_f) = \phi_f = (\rho_f, 0)$. Note that the nilpotent map corresponding to $\pi_f$ is $0$, since $\pi_f$ is cuspidal, and the corresponding $\phi_f$ is irreducible.

\begin{theorem}\label{thm:multiplicativity_of_tensor_epsilon_factor}
    Let $\pi_\lambda$ and $\pi_\mu$ be two irreducible representations parameterized by $\lambda \in P_n(\Gamma)$ and $\mu \in P_m(\Gamma)$ respectively. Then
    $$\epsilon_0(\pi_\lambda \times \pi_\mu, \psi) = \prod_{f, g} \epsilon_0(\pi_f \times \pi_g, \psi)^{|\lambda(f)||\mu(g)|},$$
    where $f$ and $g$ run over all the $F$-orbits in $\Gamma$.
\end{theorem}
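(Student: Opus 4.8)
The plan is to reduce the tensor-product epsilon factor on the finite-group side to the arithmetic $\epsilon_0$-factor of a tensor product of Weil–Deligne representations via \Cref{defn:epsilon}, and then exploit that under Macdonald's correspondence the restriction to $I$ of $\rho_\lambda$ decomposes as a direct sum indexed by the partition-valued function $\lambda$, as recorded in \Cref{thm:explicit_epsilon0_factor_for_tame_representation_in_macdonald_parameterization}(1). The point is that $\epsilon_0$ depends only on the $I$-equivalence class by \Cref{cor:epsilon0_factors_are_defined_on_I_equivence_classes}, so I never need to track the full Weil-group action or the nilpotent $N$, only the $I$-module structure.

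First I would unwind the definitions: $\epsilon_0(\pi_\lambda \times \pi_\mu, \psi) = \epsilon_0(\rho_\lambda \otimes \rho_\mu, \psi, dx)$, where I abbreviate $\mathcal{M}(\pi_\lambda) = \phi_\lambda = (\rho_\lambda, N_\lambda)$ and similarly for $\mu$ (the nilpotent part plays no role in $\epsilon_0$). Next, restrict to $I$: by \Cref{thm:explicit_epsilon0_factor_for_tame_representation_in_macdonald_parameterization}(1),
$$(\rho_\lambda)_I \cong \bigoplus_{\gamma \in \Gamma} |\lambda(\gamma)|\,\gamma, \qquad (\rho_\mu)_I \cong \bigoplus_{\delta \in \Gamma} |\mu(\delta)|\,\delta,$$
so that
$$(\rho_\lambda \otimes \rho_\mu)_I \cong \bigoplus_{\gamma, \delta \in \Gamma} |\lambda(\gamma)||\mu(\delta)|\,(\gamma \otimes \delta).$$
Grouping $\gamma$ over the $F$-orbit $f$ and $\delta$ over the $F$-orbit $g$, this is an $I$-module that depends only on the orbits $f, g$ and the multiplicities $|\lambda(f)|, |\mu(f)|$. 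Comparing with the same computation for the cuspidal pieces $\pi_f, \pi_g$ — whose $\phi_f = (\rho_f, 0)$ satisfy $(\rho_f)_I \cong \bigoplus_{\gamma \in f}\gamma$ — I see that the $I$-module $(\rho_\lambda \otimes \rho_\mu)_I$ is isomorphic to the direct sum of $|\lambda(f)||\mu(g)|$ copies of $(\rho_f \otimes \rho_g)_I$, over all pairs of orbits $(f, g)$.

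Finally, I would invoke additivity of $\epsilon_0$ in short exact sequences (equivalently multiplicativity over direct sums, \cite[(5.2)]{Deligne73}) together with \Cref{cor:epsilon0_factors_are_defined_on_I_equivence_classes}: since $\epsilon_0$ of a tamely ramified Weil–Deligne representation is determined by its $I$-restriction, and $\epsilon_0$ is multiplicative over direct sums, the isomorphism of $I$-modules above yields
$$\epsilon_0(\rho_\lambda \otimes \rho_\mu, \psi, dx) = \prod_{f, g} \epsilon_0(\rho_f \otimes \rho_g, \psi, dx)^{|\lambda(f)||\mu(g)|},$$
which is exactly the claimed identity after translating back through \Cref{defn:epsilon}. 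The one point requiring care — and the main obstacle — is justifying that $\rho_\lambda \otimes \rho_\mu$, as an actual Weil–Deligne representation (not just an $I$-module), has the same $\epsilon_0$ as the direct sum $\bigoplus_{f,g}(\rho_f \otimes \rho_g)^{\oplus |\lambda(f)||\mu(g)|}$; this is where one genuinely uses that $\epsilon_0$ (as opposed to $\epsilon$) ignores the Frobenius and the monodromy, i.e., \Cref{cor:epsilon0_factors_are_defined_on_I_equivence_classes} combined with the fact that both sides are tamely ramified of the same degree with isomorphic $I$-restrictions. Everything else is bookkeeping with the combinatorics of $F$-orbits.
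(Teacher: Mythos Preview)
Your proposal is correct and follows essentially the same route as the paper: unwind \Cref{defn:epsilon}, use \Cref{thm:explicit_epsilon0_factor_for_tame_representation_in_macdonald_parameterization}(1) to decompose $(\rho_\lambda)_I$ and $(\rho_\mu)_I$ over $F$-orbits, identify $(\rho_\lambda\otimes\rho_\mu)_I$ with $\bigoplus_{f,g}|\lambda(f)||\mu(g)|\,(\rho_f\otimes\rho_g)_I$, and then conclude via \Cref{cor:epsilon0_factors_are_defined_on_I_equivence_classes} together with multiplicativity of $\epsilon_0$ over direct sums. The ``point requiring care'' you flag is exactly the point the paper handles by invoking \Cref{cor:epsilon0_factors_are_defined_on_I_equivence_classes}.
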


\begin{proof}
    Let $\phi_\lambda = (\rho_\lambda, N_\lambda) = \mathcal{M}(\pi_\lambda)$ and $\phi_\mu = (\rho_\mu, N_\mu) = \mathcal{M}(\pi_\mu)$. Then by \Cref{thm:explicit_epsilon0_factor_for_tame_representation_in_macdonald_parameterization},
    \begin{equation*}
        (\rho_\lambda)_I = \sum_{\gamma \in \Gamma} |\lambda(\gamma)|\gamma = \sum_{f \in F \backslash \Gamma} |\lambda(f)| \sum_{\gamma \in f} \gamma,
    \end{equation*}
    and
    \begin{equation*}
        (\rho_\mu)_I = \sum_{\gamma \in \Gamma} |\mu(\gamma)|\gamma = \sum_{g \in F\backslash \Gamma} |\mu(g)| \sum_{\gamma \in g}\gamma.
    \end{equation*}
    Let $\phi = (\rho, N) = \phi_\lambda \otimes \phi_\mu$. Then $\rho = \rho_\lambda \otimes \rho_\mu$, and
    $$\rho_I = (\rho_\lambda)_I \otimes (\rho_\mu)_I = \sum_{f, g}|\lambda(f)||\mu(g)| \sum_{\gamma_1 \in f, \gamma_2 \in g} \gamma_1\gamma_2 = \sum_{f, g}|\lambda(f)||\mu(g)| (\rho_f \otimes \rho_g)_I.$$
    Here $(\rho_f, 0)$ are $(\rho_g, 0)$ correspond to the cuspidal representations $\pi_f$ and $\pi_g$ respectively. The last equality above comes from
    $$(\rho_f \otimes \rho_g)_I = (\rho_f)_I \otimes (\rho_g)_I = \sum_{\gamma_1 \in f}\gamma_1 \cdot \sum_{\gamma_2 \in g}\gamma_2 = \sum_{\gamma_1 \in f, \gamma_2 \in g}\gamma_1\gamma_2.$$
    Therefore, we have by \Cref{cor:epsilon0_factors_are_defined_on_I_equivence_classes} and multiplicativity of $\epsilon_0$ \cite[(5.2)]{Deligne73} (see also the example at the end of \Cref{sec:macdonald_correspondence_and_epsilon_factors}),
    $$\epsilon_0(\phi, \psi) = \prod_{f, g} \epsilon_0(\rho_f \otimes \rho_g, \psi)^{|\lambda(f)||\mu(g)|}.$$
    Now by \Cref{defn:epsilon},
    $$\epsilon_0(\pi_\lambda \times \pi_\mu, \psi) = \epsilon_0(\phi, \psi) = \prod_{f, g} \epsilon_0(\rho_f \otimes \rho_g, \psi)^{|\lambda(f)||\mu(g)|} = \prod_{f, g} \epsilon_0(\pi_f \times \pi_g, \psi)^{|\lambda(f)||\mu(g)|}.$$
\end{proof}

With \Cref{thm:multiplicativity_of_tensor_epsilon_factor}, we can reduce the study of $\epsilon_0(\pi_\lambda \times \pi_\mu, \psi)$ to that of $\epsilon_0(\pi_f \times \pi_g, \psi)$ for $F$-orbits $f$ and $g$ in $\Gamma$. Let $n \ge m$ be two positive integers. Let $f$ be an $F$-orbit of degree $n$. Fix an $\alpha \in f$, then $\alpha \in \Gamma_n$ and $f = \{\alpha^{q^i}: i = 0, 1, \dots, n-1\}$. Similarly, let $g = \{\beta^{q^j}: j = 0, 1, \dots, m-1\}$ be an $F$-orbit of degree $m$ for some $\beta \in \Gamma_m$. Let $\pi_f$ and $\pi_g$ be the corresponding cuspidal representations of $\G_n(\finiteField)$ and $\G_m(\finiteField)$ respectively. With these notations, we have the following

\begin{theorem}\label{thm:tensor_epsilon_factor_as_a_product_of_gauss_sums}
    Let $d = \gcd(n, m)$ be the greatest common divisor of $n$ and $m$, and let $l = \frac{nm}{d}$ be their least common multiple. Then
    \begin{equation*}
        \epsilon_0(\pi_f \times \pi_g, \psi) = (-1)^{nm}q^{-\frac{nm}{2}}\prod_{i=0}^{d-1} \tau(\alpha\beta^{q^i}, \psi_l),
    \end{equation*}
    where $\alpha\beta^{q^i} \in \Gamma_l$ is defined by
    $$(\alpha\beta^{q^i})(x) = \alpha\left(N_{l, n}(x)\right)\beta\left(\left(N_{l, m}(x)\right)^{q^i}\right),$$
    for $x \in \finiteField_l^\times$.
\end{theorem}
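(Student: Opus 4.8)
The plan is to reduce the computation to \Cref{thm:explicit_epsilon0_factor_for_tame_representation_in_macdonald_parameterization} together with the Hasse--Davenport relation \Cref{thm:hasse-davenport}, by computing the restriction of $\rho_f \otimes \rho_g$ to the inertia subgroup $I$ and recognizing the resulting characters as a disjoint union of $F$-sets. By \Cref{defn:epsilon} and \Cref{cor:epsilon0_factors_are_defined_on_I_equivence_classes} it suffices to compute $\epsilon_0(\phi_f \otimes \phi_g, \psi, dx)$, and $\phi_f \otimes \phi_g$ is a tamely ramified Weil--Deligne representation of degree $nm$, so it equals $\phi_\nu$ for a unique $\nu \in P_{nm}(\Gamma)$. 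By \Cref{thm:explicit_epsilon0_factor_for_tame_representation_in_macdonald_parameterization}(1), $(\rho_f)_I = \sum_{a=0}^{n-1} \alpha^{q^a}$ and $(\rho_g)_I = \sum_{b=0}^{m-1} \beta^{q^b}$, hence
$$
(\rho_f \otimes \rho_g)_I \;=\; (\rho_f)_I \otimes (\rho_g)_I \;=\; \sum_{a=0}^{n-1}\sum_{b=0}^{m-1} \alpha^{q^a}\beta^{q^b},
$$
where the product character $\alpha^{q^a}\beta^{q^b}$ is regarded inside $\Gamma_l$ via $N_{l,n}$ and $N_{l,m}$ (legitimate, since $q^n-1$ and $q^m-1$ both divide $q^l-1$, so $\alpha^{q^a}\beta^{q^b}\in\Gamma_l$).

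Next I would do the $F$-orbit bookkeeping. Since $\gamma \mapsto \gamma^q$ is a homomorphism on characters, $F(\alpha^{q^a}\beta^{q^b}) = \alpha^{q^{a+1}}\beta^{q^{b+1}}$, so $F$ acts on the index set $\Z/n\Z \times \Z/m\Z$ through the diagonal shift $(a,b)\mapsto(a+1,b+1)$. The subgroup generated by $(1,1)$ has order $l$ and index $d$, with coset representatives $(0,0),(0,1),\dots,(0,d-1)$; thus, as a multiset of characters of $I$,
$$
(\rho_f \otimes \rho_g)_I \;=\; \sum_{i=0}^{d-1} h_i, \qquad h_i \;:=\; \bigl\{\, (\alpha\beta^{q^i})^{q^k} : 0 \le k \le l-1 \,\bigr\},
$$
and each $h_i$ is precisely the $F$-set generated by $\alpha\beta^{q^i}$ with respect to $\Gamma_l$ (it consists of $l/d(f_i)$ copies of the $F$-orbit $f_i$ of $\alpha\beta^{q^i}$, where $d(f_i)\mid l$). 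Reading off $\nu$ from $(\rho_f\otimes\rho_g)_I = \sum_\gamma |\nu(\gamma)|\gamma$ then gives $|\nu(f')| = \#\{\, i : f_i = f' \,\}\cdot l/d(f')$ for every $F$-orbit $f'$.

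To assemble: since $\dim(V_f \otimes V_g) = nm$, \Cref{thm:explicit_epsilon0_factor_for_tame_representation_in_macdonald_parameterization}(2) gives
$$
\epsilon_0(\pi_f \times \pi_g, \psi) \;=\; \epsilon_0(\phi_\nu, \psi, dx) \;=\; (-1)^{nm} q^{-\frac{nm}{2}} \prod_{f'} \tau\bigl(f', \psi_{d(f')}\bigr)^{|\nu(f')|},
$$
and by the previous paragraph the product over $F$-orbits rearranges as $\prod_{i=0}^{d-1}\tau(f_i,\psi_{d(f_i)})^{l/d(f_i)}$. Applying \Cref{thm:hasse-davenport} to each $f_i$ (with $l$ playing the role of $n$) turns the $i$-th factor into $\tau(h_i,\psi_l) = \tau(\alpha\beta^{q^i},\psi_l)$, which yields the asserted formula after checking that this Gauss sum is computed by the stated expression $x\mapsto \alpha(N_{l,n}(x))\beta((N_{l,m}(x))^{q^i})$ (just the description of the product character in $\Gamma_l$).

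The main obstacle is the combinatorial core: verifying that the diagonal $F$-action on $\Z/n\Z \times \Z/m\Z$ decomposes into exactly $d$ orbits, each of which is the $F$-set generated with respect to $\Gamma_l$ by one of the characters $\alpha\beta^{q^0},\dots,\alpha\beta^{q^{d-1}}$, so that the multiplicities $l/d(f_i)$ — and possible coincidences among the $f_i$ — are tracked correctly when the product of Gauss sums collapses via Hasse--Davenport; the sign $(-1)^{nm}$ and the factor $q^{-nm/2}$ are then only the dimension count. (As an alternative to the first two paragraphs, one may instead Mackey-decompose $\mathrm{Ind}_{W(\overline{\localField}/\localField_n)}^{W}\chi_\alpha \otimes \mathrm{Ind}_{W(\overline{\localField}/\localField_m)}^{W}\chi_\beta$ into $d$ summands induced from $W(\overline{\localField}/\localField_l)$ and apply \Cref{thm:explicit_epsilon0_factor_for_tame_representation} directly.)
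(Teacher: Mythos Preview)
Your proof is correct and follows essentially the same approach as the paper: decompose $(\rho_f\otimes\rho_g)_I$ into $d$ $F$-sets $h_i$ generated by $\alpha\beta^{q^i}$ with respect to $\Gamma_l$, then apply \Cref{thm:explicit_epsilon0_factor_for_tame_representation_in_macdonald_parameterization} and Hasse--Davenport. The only cosmetic difference is that you phrase the combinatorial core as the coset decomposition of $\Z/n\Z\times\Z/m\Z$ under $\langle(1,1)\rangle$, whereas the paper writes out the equivalent Chinese-remainder computation explicitly.
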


\begin{proof}
    Let $\phi_f = (\rho_f, N_f) = \mathcal{M}(\pi_f)$ and $\phi_g = (\rho_g, N_g) = \mathcal{M}(\pi_g)$. By \Cref{thm:explicit_epsilon0_factor_for_tame_representation_in_macdonald_parameterization}, we have
    \begin{equation*}
        (\rho_f)_I = \sum_{i=0}^{n-1}\alpha^{q^i} \text{ and } (\rho_g)_I = \sum_{j=0}^{m-1}\beta^{q^j},
    \end{equation*}
    where $(\rho_f)_I, (\rho_g)_I$ are the restrictions to the inertia subgroup $I$ of $\rho_f$, $\rho_g$ respectively. If we set $\phi = (\rho, N) = \phi_f \otimes \phi_g \in \Phi^t_I(\G_{nm})$, then
    \begin{equation*}
        \rho_I = (\rho_f \otimes \rho_g)_I = (\rho_f)_I \otimes (\rho_g)_I
               = \left(\sum_{i=0}^{n-1}\alpha^{q^i}\right) \otimes \left(\sum_{j=0}^{m-1}\beta^{q^j}\right)
               = \sum_{i, j} \alpha^{q^i}\beta^{q^j}.
    \end{equation*}
    We claim that these $\alpha^{q^i}\beta^{q^j}$'s are elements of $\Gamma_l$, and that they are partitioned into the $d$ $F$-sets (might not be distinct) $h_0, h_1, \dots, h_{d-1}$, where $h_i$ is the $F$-set generated by $\alpha\beta^{q^i}$ with respect to $\Gamma_l$. On one hand, each $h_i$ is of size $l$, so there are $ld = nm$ characters in $h_0, h_1, \dots, h_{d-1}$ in total. On the other hand, $\alpha^{q^i}\beta^{q^j} \in h_{\delta}$ if and only if there exists an integer $t$ such that $\alpha^{q^i}\beta^{q^j} = \alpha^{q^t}\beta^{q^{\delta+t}}$. Since $\alpha^{q^n} = \alpha$ and $\beta^{q^m} = \beta$, $\alpha^{q^i}\beta^{q^j} \in h_{\delta}$ if and only if the following system of congruence equations has a solution:
    \begin{equation*}
        \left\{
        \begin{split}
            &t \equiv i \mod{n} \\
            &t \equiv j - \delta \mod{m}
        \end{split}
        \right..
    \end{equation*}
    This system has a solution if and only if $i \equiv j - \delta \mod{d}$, or equivalently $\delta \equiv j - i \mod{d}$. Therefore, for each $\alpha^{q^i}\beta^{q^j}$, there exists exactly one $\delta \in \{0, 1, \dots, d-1\}$ such that $\alpha^{q^i}\beta^{q^j} \in h_{\delta}$, i.e., the only one satisfying $\delta \equiv j - i \mod{d}$. Since the size of $\{\alpha^{q^i}\beta^{q^j}: 0 \le i \le n-1, 0 \le j \le m-1\}$ is $nm$ and each of these appears in $h_\delta$ for some $\delta \in \{0, 1, \dots, d-1\}$, our claim follows from a counting argument.

     For each $i = 0, 1, \dots, d-1$, $h_i$ consists of $t_i$ copies of some $F$-orbit $f_i$ of degree $\frac{l}{t_i}$. Thus, by \Cref{thm:explicit_epsilon0_factor_for_tame_representation_in_macdonald_parameterization} and \Cref{thm:hasse-davenport}, we have
    $$\epsilon_0(\phi, \psi) = (-1)^{nm}q^{-\frac{nm}{2}}\prod_{i=0}^{d-1} \tau(f_i, \psi_{\frac{l}{t_i}})^{t_i} = (-1)^{nm}q^{-\frac{nm}{2}}\prod_{i=0}^{d-1} \tau(h_i, \psi_l).$$
    Thus, by the definition of $\tau(h_i, \psi_l)$, we have
    $$\epsilon_0(\pi_f \times \pi_g, \psi) = \epsilon_0(\phi, \psi) = (-1)^{nm}q^{-\frac{nm}{2}}\prod_{i=0}^{d-1} \tau(\alpha\beta^{q^i}, \psi_l).$$
\end{proof}

If $\gcd(n, m) = 1$, then $\epsilon_0(\pi_f \times \pi_g, \psi)$ is just a multiple of a Gauss sum $\tau(\alpha\beta, \psi_{nm})$, where $\alpha\beta \in \Gamma_{nm}$ is defined by $(\alpha\beta)(x) = \alpha\left(N_{nm, n}(x)\right)\beta\left(N_{nm, m}(x)\right)$ for $x \in \finiteField_{nm}^\times$. In particular if $m = 1$, we have
\begin{equation*}
    \epsilon_0(\pi_f \times \beta, \psi) = (-1)^n q^{-\frac{n}{2}}\tau(\alpha\beta, \psi_{n}).
\end{equation*}

\subsection{Rankin-Selberg gamma factors}

Let $n > m$ be two positive integers. Let $f$ and $g$ be two $F$-orbits in $\Gamma$ such that $d(f) = n$ and $d(g) = m$. Then $\pi_f$ and $\pi_g$ are two irreducible cuspidal representations of $\G_n(\finiteField)$ and $\G_m(\finiteField)$ respectively. Let $\phi_f = (\rho_f, 0)$ and $\phi_g = (\rho_g, 0)$ be the tamely ramified Weil-Deligne representations of $W$ corresponding to $\pi_f$ and $\pi_g$ respectively. Let $\Pi_f$ be the level zero supercuspidal representation of $\G_n(\localField)$ constructed from the pair $(\pi_f, 1)$, see \cite[Section 2.1]{Ye18}. That is to say,
$$\Pi_f = \mathrm{ind}_{\localField^{\times} \cdot \G_n(\mathfrak{o})}^{\G_n(\localField)} \left(\chi_f \cdot \pi_f\right),$$
where $\pi_f$ is a representation of $\G_n(\mathfrak{o})$ via the natural map $\G_n(\mathfrak{o}) \to \G_n(\finiteField)$, $\chi_f : \localField^{\times} \rightarrow \mathbb{C}^{\times}$ is the character defined on $\mathfrak{o}^{\times}$ via the central character of $\pi_f$, and on $\varpi$ by $\chi_f \left( \varpi \right) = 1$, $\left(\chi_f \cdot \pi_f\right)\left(z \cdot k\right) = \chi_f \left(z\right) \pi_f \left(k_0\right)$ for $z \in \localField^{\times}$ and $k_0 \in \G_n\left(\mathfrak{o}\right)$, and $\mathrm{ind}$ is compact induction. Similarly, let $\Pi_g$ be the level zero supercuspidal representation of $\G_m(\localField)$ constructed from the pair $(\pi_g, 1)$. The choices of $\Pi_f$ and $\Pi_g$ here are not essential, we can choose any level zero supercuspidal representations $\Pi_f$ and $\Pi_g$ constructed from $\pi_f$ and $\pi_g$ respectively.

Let $\gamma(s, \Pi_f \times \Pi_g, \psi)$ and $\epsilon(s, \Pi_f \times \Pi_g, \psi)$ be the Rankin-Selberg factors defined in \cite{JacquetPiShShalika83}, and let $\gamma(\pi_f \times \pi_g, \psi)$ be the Rankin-Selberg gamma factor defined in \cite{Roditty10,Nien14} (see also \cite{Ye18} for a different normalization). $\gamma(\pi_f \times \pi_g, \psi)$ is related to $\gamma(s, \Pi_f \times \Pi_g, \psi)$ by

\begin{theorem}[{\cite[Theorem 3.11]{NienZhang18}, \cite[Theorem 3.1]{Ye18}}]\label{thm:equality_of_rankin_selberg_gamma_factors}
    Let $\omega_g$ be the central character of $\pi_g$. Then
    $$\omega_g(-1)^{n-1} \gamma(s, \Pi_f \times \Pi_g, \psi) = q^{\frac{m(n-m-1)}{2}}   \gamma(\pi_f \times \pi_g, \psi).$$
\end{theorem}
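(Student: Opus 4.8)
The plan is to compare the two gamma factors through their defining integral representations and to evaluate the local Rankin--Selberg integral of \cite{JacquetPiShShalika83} explicitly on a convenient level-zero test vector. As a preliminary reduction, note that for $n > m$ with $\Pi_f$ supercuspidal one has $L(s, \Pi_f \times \Pi_g) = L(1-s, \widetilde{\Pi}_f \times \widetilde{\Pi}_g) = 1$, so $\gamma(s, \Pi_f \times \Pi_g, \psi) = \epsilon(s, \Pi_f \times \Pi_g, \psi)$. Moreover, since $\alpha \in \Gamma_n$ has exact degree $n$ and $m < n$, no character $\alpha\beta^{q^i}$ can be trivial (such a relation would force $\alpha$ to factor through $\finiteField_{\gcd(n,m)}^\times$, contradicting its exact degree), so the Galois parameter $\rho_f \otimes \rho_g$ of $\Pi_f \times \Pi_g$ has no $I$-fixed vectors; by the discussion around \Cref{cor:sufficient_condition_for_I_invariant_subspace_to_vanish} its conductor equals its dimension $nm$, and hence $\gamma(s, \Pi_f \times \Pi_g, \psi)$ is independent of $s$, which is consistent with the right-hand side of the asserted identity. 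It therefore suffices to evaluate the Rankin--Selberg functional equation on a single well-chosen pair of Whittaker vectors.

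The vectors come from the compact-induction model $\Pi_f = \mathrm{ind}_{\localField^\times \G_n(\mathfrak{o})}^{\G_n(\localField)}(\chi_f \cdot \pi_f)$. Fixing a $\psi$-Whittaker functional on $\pi_f$, the explicit construction in \cite{Ye18} produces a vector $v_f \in \Pi_f$ whose Whittaker function $W_{v_f}$ is supported on $N_n(\localField)\,\localField^\times\,\G_n(\mathfrak{o})$ and whose restriction to $\G_n(\mathfrak{o})$ is the inflation of a Whittaker function of $\pi_f$ --- equivalently, a scalar multiple of the Bessel function of $\pi_f$; choose $v_g \in \Pi_g$ analogously. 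Verifying these support and restriction properties, in particular that $W_{v_f}$ agrees with the inflated finite-field Whittaker function on the relevant mirabolic-type cosets and vanishes off the stated double coset, is where the bulk of the local work lies, and is the step I expect to be the main obstacle, together with the final normalization bookkeeping.

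With these choices I would insert $W_{v_f}$ and $W_{v_g}$ into the local zeta integral
$$\Psi(s, W_{v_f}, W_{v_g}) = \int_{N_m(\localField) \backslash \G_m(\localField)} \int_{M_{(n-1-m) \times m}(\localField)} W_{v_f}\begin{pmatrix} g & & \\ x & I_{n-1-m} & \\ & & 1 \end{pmatrix} W_{v_g}(g)\, \lvert \det g \rvert^{s - \frac{n-m}{2}}\, dx\, dg.$$
The support of $W_{v_f}$ confines $g$ to (representatives of) $N_m(\mathfrak{o}) \backslash \G_m(\mathfrak{o})$ and $x$ to $M_{(n-1-m) \times m}(\mathfrak{o})$; on this locus $\lvert \det g \rvert = 1$, the character $\psi$ reduces to $\psi_\finiteField$, and the two integrations collapse to finite sums, leaving essentially the finite-field Rankin--Selberg sum of $\pi_f$ against $\pi_g$ of \cite{Nien14} times an explicit power of $q$ governed by the normalization $\mathrm{vol}(\mathfrak{p}) = q^{-1/2}$ (the unipotent block $M_{(n-1-m)\times m}$ is the source of the exponent $\tfrac{m(n-m-1)}{2}$). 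Carrying out the parallel computation for the dual integral $\widetilde{\Psi}(1-s, \cdot, \cdot)$ --- with contragredients $\widetilde{\Pi}_f, \widetilde{\Pi}_g$, which are again level zero attached to $\widetilde{\pi}_f, \widetilde{\pi}_g$, and with the JPSS Weyl element $w_{n,m}$ --- yields the dual finite-field sum together with the factor $\omega_g(-1)^{n-1}$ intrinsic to the JPSS functional equation. Dividing the two identities, the local functional equation becomes the finite-field one defining $\gamma(\pi_f \times \pi_g, \psi)$, multiplied by the leftover constant $q^{\frac{m(n-m-1)}{2}} \omega_g(-1)^{-(n-1)}$; rearranging gives the claimed equality. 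What remains is the careful matching of every normalization constant --- the self-dual measure, the shift $\lvert \det g \rvert^{s-(n-m)/2}$, the unipotent volume, and the Weyl-element twist --- which, as indicated, is the delicate part of the argument.
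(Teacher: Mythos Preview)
The paper does not supply its own proof of this theorem; it is quoted verbatim from \cite[Theorem 3.11]{NienZhang18} and \cite[Theorem 3.1]{Ye18} and then used as a black box in the proof of \Cref{thm:equality_of_rankin_selberg_gamma_factor_and_tensor_product_epsilon_factor}. Your proposal---constructing level-zero Whittaker vectors from the compact-induction model, verifying their support on $N_n(\localField)\,\localField^\times\G_n(\mathfrak{o})$, and collapsing the JPSS zeta integral and its dual to the finite-field Rankin--Selberg sums defining $\gamma(\pi_f\times\pi_g,\psi)$---is exactly the strategy carried out in those cited references (most explicitly in \cite{Ye18}), so your sketch is correct and matches the source of the result rather than diverging from it.

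One minor remark: your preliminary observation that $\gamma(s,\Pi_f\times\Pi_g,\psi)$ is independent of $s$ is correct and helpful for consistency, but it is not logically needed for the argument, since the collapse of the integrals to finite sums already forces both sides to be constant in $s$; the independence emerges from the computation rather than being an input to it.
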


Next, we will show that $\gamma(\pi_f \times \pi_g, \psi)$ is also equal to $\epsilon_0(\pi_f \times \pi_g, \psi)$ up to some constant.

\begin{theorem}\label{thm:equality_of_rankin_selberg_gamma_factor_and_tensor_product_epsilon_factor}
    \begin{equation*}
        \gamma(\pi_f \times \pi_g, \psi) = q^{-\frac{m(n-m-1)}{2}}\omega_g(-1)^{n-1}\epsilon_0(\pi_f \times \pi_g, \psi).
    \end{equation*}
\end{theorem}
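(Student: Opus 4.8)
The plan is to transfer the finite-field gamma factor to the $p$-adic Rankin--Selberg gamma factor $\gamma(s, \Pi_f \times \Pi_g, \psi)$ via \Cref{thm:equality_of_rankin_selberg_gamma_factors}, identify the latter with a Deligne $\epsilon$-factor on the Galois side through the local Langlands correspondence, and then observe that in the present level-zero cuspidal regime this $\epsilon$-factor collapses onto $\epsilon_0(\pi_f \times \pi_g, \psi)$. Concretely, it suffices to establish
$$\gamma(s, \Pi_f \times \Pi_g, \psi) = \epsilon_0(\pi_f \times \pi_g, \psi);$$
substituting this into \Cref{thm:equality_of_rankin_selberg_gamma_factors} and solving for $\gamma(\pi_f \times \pi_g, \psi)$ yields the asserted identity.

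To set up the Galois side: since $\Pi_f$ and $\Pi_g$ are supercuspidal, their local Langlands parameters have the shape $\phi(\Pi_f) = (\rho(\Pi_f), 0)$ and $\phi(\Pi_g) = (\rho(\Pi_g), 0)$ with $\rho(\Pi_f), \rho(\Pi_g)$ irreducible; because $\Pi_f, \Pi_g$ are built from the level-zero data $(\pi_f, 1)$, $(\pi_g, 1)$, and Macdonald's correspondence is the restriction of the local Langlands correspondence to level zero (\cite[(A.1)]{SilbergerZink08}), these parameters are tamely ramified with $\rho(\Pi_f)|_I \cong (\rho_f)_I$ and $\rho(\Pi_g)|_I \cong (\rho_g)_I$. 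The compatibility of the local Langlands correspondence with the Rankin--Selberg $\epsilon$- and $L$-factors (\cite{HarrisTaylor01,Henniart00}, via the functional equation of \cite{JacquetPiShShalika83}) then identifies $\gamma(s, \Pi_f \times \Pi_g, \psi)$ with $\epsilon(s, \rho(\Pi_f) \otimes \rho(\Pi_g), \psi, dx)$ multiplied by the ratio of Artin $L$-factors of the Weil--Deligne representation $\phi(\Pi_f) \otimes \phi(\Pi_g)$.

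I would then show that all of the $L$-factors and the $s$-dependence are trivial here. The tensor product $\phi(\Pi_f) \otimes \phi(\Pi_g)$ has nilpotent part $0$, and its inertia restriction is $(\rho_f)_I \otimes (\rho_g)_I = \sum_{i,j} \alpha^{q^i}\beta^{q^j}$; since $n > m$, each $\alpha^{q^i}$ lies in an $F$-orbit of size $n$ while $(\beta^{-1})^{q^j}$ lies in one of size $m < n$, so no $\alpha^{q^i}\beta^{q^j}$ equals the trivial character, i.e.\ the $I$-invariant subspace $V^I$ vanishes. Hence the Artin $L$-factors are $1$; the Artin conductor is $a(\phi(\Pi_f)\otimes\phi(\Pi_g)) = \dim(V/V^I) + b = nm + 0 = \dim V$, so by Deligne's formula the $s$-shift $q^{(\dim V - a)s}$ is trivial; and since $V^I_N = V^I = 0$, \Cref{eqn:epsilon_factor_of_weil_deligne_representation} gives $\epsilon(\phi(\Pi_f)\otimes\phi(\Pi_g), \psi, dx) = \epsilon_0(\rho(\Pi_f)\otimes\rho(\Pi_g), \psi, dx)$. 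Combining, $\gamma(s, \Pi_f \times \Pi_g, \psi) = \epsilon_0(\rho(\Pi_f) \otimes \rho(\Pi_g), \psi, dx)$. Finally, $\rho(\Pi_f)\otimes\rho(\Pi_g)$ and $\rho_f \otimes \rho_g$ have isomorphic restrictions to $I$, so \Cref{cor:epsilon0_factors_are_defined_on_I_equivence_classes} turns the last expression into $\epsilon_0(\rho_f \otimes \rho_g, \psi, dx)$, which by \Cref{eqn:epsilon0_factor_of_weil_deligne_representation} and \Cref{defn:epsilon} is precisely $\epsilon_0(\pi_f \times \pi_g, \psi)$, as required.

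The main obstacle is the one genuinely non-formal input: the compatibility of the local Langlands correspondence with Rankin--Selberg $\epsilon$-factors, together with the vanishing of $L(s, \Pi_f \times \Pi_g)$ and $L(1-s, \widetilde{\Pi}_f \times \widetilde{\Pi}_g)$ for supercuspidals of distinct ranks. These are standard and may simply be cited; the rest of the argument is the elementary remark that no $\alpha^{q^i}\beta^{q^j}$ is trivial when $n > m$ (which is what makes $\epsilon = \epsilon_0$ and kills the $s$-dependence) plus the constant bookkeeping already packaged in \Cref{thm:equality_of_rankin_selberg_gamma_factors}. An alternative, entirely internal route would be to compare the explicit Gauss-sum formula of \Cref{thm:tensor_epsilon_factor_as_a_product_of_gauss_sums} with a known Gauss-sum expansion for $\gamma(\pi_f \times \pi_g, \psi)$, at the cost of trading a clean citation for a direct computation.
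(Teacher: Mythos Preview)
Your proposal is correct and follows essentially the same route as the paper: pass from $\gamma(\pi_f \times \pi_g,\psi)$ to $\gamma(s,\Pi_f \times \Pi_g,\psi)$ via \Cref{thm:equality_of_rankin_selberg_gamma_factors}, use the local Langlands correspondence (through \cite{SilbergerZink08} and \cite{HarrisTaylor01}) to move to $\epsilon(s,\phi_f\otimes\phi_g,\psi,dx)$, and then invoke the observation that no $\alpha^{q^i}\beta^{q^j}$ is trivial when $n>m$ to collapse this to $\epsilon_0$. The only cosmetic difference is that the paper kills the $L$-factors on the automorphic side (supercuspidals of distinct ranks give $L\equiv 1$, so $\gamma=\epsilon$ there) and then cites \Cref{cor:sufficient_condition_for_I_invariant_subspace_to_vanish} for $\epsilon(s,\cdot)=\epsilon_0(\cdot)$, whereas you do both steps on the Galois side via $V^I=0$; these are equivalent under the $L$-factor matching in LLC.
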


\begin{proof}
    Since $n > m$ and $\Pi_f$ and $\Pi_g$ are supercuspidal, $L \left(s, \Pi_f \times \Pi_g \right) = 1$. Similarly, $L\left(1-s, \Pi_f^\vee \times \Pi_g^\vee\right) =1$, where $\Pi_f^\vee$ and $\Pi_g^\vee$ are the contragredient representations of $\Pi_f$, $\Pi_g$ respectively. Thus, we have
    \begin{equation}\label{eqn:analytic_gamma_factor_equals_analytic_epsilon_factor}
        \gamma(s, \Pi_f \times \Pi_g, \psi) = \epsilon(s, \Pi_f \times \Pi_g, \psi).
    \end{equation}
    We have from \cite[Proposition 1 in A.2]{SilbergerZink08} the following commutative diagram:
    \begin{center}
        \begin{tikzcd}
            \Pi_0(\G_n(\localField)) \arrow[r, "LLC_0"] \arrow[d, "p_1" left] & \Phi^t(\G_n) \arrow[d, "p_2" right]\\
            \Pi_0(\G_n(\finiteField)) \arrow[r, "\mathcal{M}"] & \Phi^t_I(\G_n)
        \end{tikzcd},
    \end{center}
    where $\Pi_0(\G_n(\localField))$ is the set of equivalence classes of level zero supercuspidal representations of $\G_n(\localField)$, $\Pi_0(\G_n(\finiteField))$ is the set of equivalence classes of cuspidal representations of $\G_n(\finiteField)$, $LLC_0$ is the restriction of the local Langlands correspondence to level zero supercuspidal representations, $\mathcal{M}$ is Macdonald's correspondence, $p_1$ is the map sending a level zero supercuspidal representation to the cuspidal representation from which it was constructed, and $p_2$ is the canonical projection.

    Therefore, there exist $\phi_f = (\rho_f, 0) \in \Phi^t(\G_n)$ and $\phi_g = (\rho_g, 0) \in \Phi^t(\G_m)$ in the $I$-equivalence classes of tamely ramified Weil-Deligne representations parameterized by $f$ and $g$ respectively that are images of $\Pi_f$ and $\Pi_g$ under the local Langlands correspondence. Since the local Langlands correspondence matches epsilon factors of pairs \cite[Corollary VII.2.17]{HarrisTaylor01}, we get
    \begin{equation}\label{eqn:analytic_epsilon_factor_equals_arithmetic_epsilon_factor}
        \epsilon(s, \Pi_f \times \Pi_g, \psi) = \epsilon(s, \phi_f \otimes \phi_g, \psi, dx).
    \end{equation}
    Note that it is important to normalize $dx$ in \Cref{eqn:analytic_epsilon_factor_equals_arithmetic_epsilon_factor} to be self dual with respect to $\psi$, which is exactly what we do.

    Since $\rho_f$ and $\rho_g$ are tamely ramified, $\rho_f \otimes \rho_g$ is also tamely ramified and
    $$(\rho_f \otimes \rho_g)_I = \sum_{\gamma_1 \in f, \gamma_2 \in g} \gamma_1 \gamma_2.$$
    Since $\rho_f$ and $\rho_g$ are of different degrees, $\gamma_1\gamma_2$ is not fixed by the action of $F$, so $\gamma_1\gamma_2$ in the summand above will never be the trivial character of $\finiteField$. In other words, if $\lambda^\prime \in P_n(\Gamma)$ is the partition-valued function on $\Gamma$ corresponding to $\phi_f \otimes \phi_g$ under \Cref{thm:parameterization_of_tame_representations}, then $\lambda^\prime(1) = ()$.  Therefore, by \Cref{cor:sufficient_condition_for_I_invariant_subspace_to_vanish}, we have $\epsilon(s, \phi_f \otimes \phi_g, \psi, dx) = \epsilon_0(\phi_f \otimes \phi_g, \psi)$. Thus, we conclude the theorem from \Cref{thm:tensor_epsilon_factor_as_a_product_of_gauss_sums}, \Cref{thm:equality_of_rankin_selberg_gamma_factors}, \Cref{eqn:analytic_gamma_factor_equals_analytic_epsilon_factor} and \Cref{eqn:analytic_epsilon_factor_equals_arithmetic_epsilon_factor}.
\end{proof}

As a corollary of \Cref{thm:tensor_epsilon_factor_as_a_product_of_gauss_sums} and \Cref{thm:equality_of_rankin_selberg_gamma_factor_and_tensor_product_epsilon_factor}, we get

\begin{corollary}\label{cor:rankin_selberg_factor_as_a_product_of_gauss_sums}
    Using the notations of \Cref{thm:tensor_epsilon_factor_as_a_product_of_gauss_sums} and \Cref{thm:equality_of_rankin_selberg_gamma_factor_and_tensor_product_epsilon_factor}, we have
    $$\gamma(\pi_f \times \pi_g, \psi) = (-1)^{nm}q^{-nm+\frac{m(m+1)}{2}}\beta(-1)^{n-1}\prod_{i=0}^{d-1} \tau(\alpha\beta^{q^i}, \psi_l).$$
\end{corollary}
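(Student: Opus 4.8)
The plan is to combine the two main results of this subsection, \Cref{thm:tensor_epsilon_factor_as_a_product_of_gauss_sums} and \Cref{thm:equality_of_rankin_selberg_gamma_factor_and_tensor_product_epsilon_factor}; the corollary is a purely formal consequence. First I would start from \Cref{thm:equality_of_rankin_selberg_gamma_factor_and_tensor_product_epsilon_factor}, which gives
$$\gamma(\pi_f \times \pi_g, \psi) = q^{-\frac{m(n-m-1)}{2}}\,\omega_g(-1)^{n-1}\,\epsilon_0(\pi_f \times \pi_g, \psi),$$
and then substitute the explicit Gauss-sum expression for the tensor product epsilon factor from \Cref{thm:tensor_epsilon_factor_as_a_product_of_gauss_sums}, namely $\epsilon_0(\pi_f \times \pi_g, \psi) = (-1)^{nm} q^{-\frac{nm}{2}} \prod_{i=0}^{d-1} \tau(\alpha\beta^{q^i}, \psi_l)$.

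Two small bookkeeping points then finish the proof. The powers of $q$ combine as $-\frac{m(n-m-1)}{2} - \frac{nm}{2} = -nm + \frac{m(m+1)}{2}$, which is exactly the exponent appearing in the claimed identity; this is a one-line computation. For the root-of-unity factor, I would note that the constant in \Cref{thm:equality_of_rankin_selberg_gamma_factor_and_tensor_product_epsilon_factor} is expressed through $\omega_g$, the central character of $\pi_g$, whereas the corollary is phrased using the representative $\beta \in \Gamma_m$ of the orbit $g$ fixed in \Cref{thm:tensor_epsilon_factor_as_a_product_of_gauss_sums}. Recalling from Green's parameterization that the central character of the cuspidal representation $\pi_g = \pi_\beta$ is the restriction of $\beta$ to the scalar subgroup $\finiteField^\times \hookrightarrow \finiteField_m^\times$, we have $\omega_g(-1) = \beta(-1)$, hence $\omega_g(-1)^{n-1} = \beta(-1)^{n-1}$.

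Since both inputs are already proved in the excerpt, there is no genuine obstacle here: the only thing that can go wrong is a slip in the arithmetic of the $q$-exponents or the $(-1)$-powers, so I would carry those out carefully and assemble the final equality
$$\gamma(\pi_f \times \pi_g, \psi) = (-1)^{nm} q^{-nm + \frac{m(m+1)}{2}} \beta(-1)^{n-1} \prod_{i=0}^{d-1} \tau(\alpha\beta^{q^i}, \psi_l).$$
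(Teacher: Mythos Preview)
Your proposal is correct and matches the paper's own proof essentially verbatim: the paper also derives the corollary directly by combining \Cref{thm:tensor_epsilon_factor_as_a_product_of_gauss_sums} and \Cref{thm:equality_of_rankin_selberg_gamma_factor_and_tensor_product_epsilon_factor}, together with the observation that the central character $\omega_g$ of $\pi_g$ is $\beta|_{\finiteField^\times}$, so that $\omega_g(-1)=\beta(-1)$. Your explicit check of the $q$-exponent is a harmless elaboration of what the paper leaves implicit.
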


\begin{remark}
	After this paper was written, the preprint \cite{Yang2019} of Yang appeared, in which a different proof of \Cref{cor:rankin_selberg_factor_as_a_product_of_gauss_sums} is given.
\end{remark}

\begin{proof}
    It is a direct consequence of \Cref{thm:tensor_epsilon_factor_as_a_product_of_gauss_sums} and \Cref{thm:equality_of_rankin_selberg_gamma_factor_and_tensor_product_epsilon_factor}, with an observation that $\omega_g$, the central character of $\pi_g$, is $\beta|_{\finiteField^\times}$, i.e., the restriction of $\beta$ to $\finiteField^\times$.
\end{proof}

The appearance of $\beta(-1)^{n-1}$ in \Cref{cor:rankin_selberg_factor_as_a_product_of_gauss_sums} is due to the different choices of normalization of the Rankin-Selberg gamma factors in \cite{JacquetPiShShalika83} and \cite{Nien14}. The result when $m=1$ has already been proven in \cite[Theorem 1.1]{Nien17}. The corollary also answers \cite[Conjecture 2.2]{NienZhang18}. Nien and Zhang conjecture that
\begin{equation}\label{eqn:nien_zhang_conjecture}
    \gamma(\pi_f \times \pi_g, \psi) = (-1)^{nm-m+1}q^{-nm+\frac{m(m+1)}{2}}\alpha(-1)^{m-1}\beta(-1)^{n-1}\tau(\alpha\beta, \psi_{nm}),
\end{equation}
which in general is not correct. If $n$ and $m$ are coprime, then their conjecture is correct, possibly up to a sign. However, if they are not coprime, one can find examples in which Nien-Zhang's conjecture does not compute gamma factors correctly, as illustrated in the following example.

\textbf{Example.} Let $\finiteField = \mathbb{F}_3$, the field of $3$ elements. Let $\psi: \finiteField \to \C^\times$ be defined by $\psi(x) = e^{\frac{2\pi i x}{3}}$. Let $n = 4$ and $m = 2$. $l = \mathrm{lcm}(n, m) = 4$ and $d = \gcd(n, m) = 2$. Let $\xi$ be a root of $x^4+2x^3+2=0$ and $\zeta = \xi^{10}$ be a root of $x^2+2x+2=0$. Then $\finiteField_4 = \finiteField(\xi)$ and $\finiteField_2 = \finiteField(\zeta)$, and $\xi$, $\zeta$ are generators of the cyclic groups $\finiteField_4^{\times}$ and $\finiteField_2^{\times}$ respectively. Let $\alpha$ be the multiplicative character of $\finiteField_4$ defined by $\alpha(\xi)=e^{\frac{33\pi i}{20}}$ and $\beta$ be the multiplicative character of $\finiteField_2$ defined by $\beta(\zeta)=e^{\frac{\pi i}{4}}$. Let $f$ be the $F$-orbit of $\alpha$, then $f = \{\alpha, \alpha^3, \alpha^9, \alpha^{27}\}$ with evaluations at $\xi$ being
$$\{e^{\frac{33\pi i}{20}}, e^{\frac{19\pi i}{20}}, e^{\frac{17\pi i}{20}}, e^{\frac{11\pi i}{20}}\}.$$
Similarly, if $g = \{\beta, \beta^3\}$ is the $F$-orbit of $\beta$ with evaluations at $\zeta$ being $\{e^{\frac{\pi i}{4}}, e^{\frac{3\pi i}{4}}\}$. Then $\pi_f$ is an irreducible cuspidal representation of $\G_4(k)$ and $\pi_g$ is an irreducible cuspidal representation of $\G_2(k)$.

Gauss sums can be computed easily with the help of SageMath. We computed the right hand side of the equation in \Cref{cor:rankin_selberg_factor_as_a_product_of_gauss_sums}:
$$(-1)^{nm}q^{-nm+\frac{m(m+1)}{2}}\beta(-1)^{n-1}\prod_{i=0}^{d-1} \tau(\alpha\beta^{q^i}, \psi_l) = -\frac{2}{9} + \frac{\sqrt{5}}{9}i.$$
The right hand side of \Cref{eqn:nien_zhang_conjecture} is
$$(-1)^{nm-m+1}q^{-nm+\frac{m(m+1)}{2}}\alpha(-1)^{m-1}\beta(-1)^{n-1}\tau(\alpha\beta, \psi_{nm}) = -\frac{1}{27} - \frac{4\sqrt{5}}{27}i.$$

To compute $\gamma(\pi_f \times \pi_g, \psi)$, we use the formula involving Bessel functions, see \cite[Proposition 2.8]{Nien17}:
\begin{equation}\label{eqn:gamma_factor_in_terms_of_bessel_functions}
    \gamma(\pi_f \times \pi_g, \psi) = \sum_{h \in U_m \backslash \G_m(\finiteField)}
    B_{f, \psi} \begin{pmatrix} 0 & I_{n-m} \\ h & 0 \end{pmatrix}
    B_{g, \psi^{-1}}(h),
\end{equation}
where $U_m \subset \G_m(\finiteField)$ is the standard maximal unipotent subgroup, $B_{f, \psi}$ is the normalized Bessel function of $\pi_f$ with respect to $\psi$ and $B_{g, \psi^{-1}}$ is the normalized Bessel function of $\pi_g$ with respect to $\psi^{-1}$. To be more explicit, the algorithm runs by the following steps. First, we compute the trace of $\pi_f$ and $\pi_g$ using \cite[(6.1)]{Gelfand70}. Secondly, we compute the normalized Bessel functions using \cite[Proposition 4.5]{Gelfand70}. Finally, we compute the gamma factor using \Cref{eqn:gamma_factor_in_terms_of_bessel_functions}.
We implement the algorithm using SageMath and get
$$\gamma(\pi_f \times \pi_g, \psi) = -\frac{2}{9} + \frac{\sqrt{5}}{9}i.$$

\section{Exterior square epsilon factors}\label{sec:exterior_square_epsilon_factors}

In this section, we perform a similar analysis for $\epsilon_0(\pi, \wedge^2, \psi)$ as in \Cref{sec:tensor_epsilon_factors}. Since some techniques have already been presented in the previous section, we will present only the core of the proofs of some of the theorems.

Let $\lambda \in P_n(\Gamma)$, and $\pi_\lambda$ be the corresponding irreducible representation of $\G_n(\finiteField)$. Suppose that the support of $\lambda$ is $\{f_1, f_2, \dots, f_t\}$, where the $f_i$'s are $F$-orbits in $\Gamma$, i.e., $\lambda(\gamma)$ is a non-empty partition if and only if $\gamma \in f_i$ for some $i$. $\epsilon_0(\pi_\lambda, \wedge^2, \psi)$ satisfies the following multiplicativity theorem.

\begin{theorem}\label{thm:multiplicativity_of_exterior_square_epsilon_factor}
    Denote by $\pi_{f_i}$ the (equivalence class of the) irreducible cuspidal representation corresponding to the $F$-orbit $f_i$ as before. Let $n_i = |\lambda(f_i)|$. Then
    \begin{equation*}
        \epsilon_0(\pi_\lambda, \wedge^2, \psi) 
        = \prod_{1 \le i < j \le t} \epsilon_0(\pi_{f_i} \times \pi_{f_j}, \psi)^{n_in_j} \cdot
          \prod_{i=1}^t \epsilon_0(\pi_{f_i} \times \pi_{f_i}, \psi)^{{n_i \choose 2}} \cdot
          \prod_{i=1}^t \epsilon_0(\pi_{f_i}, \wedge^2, \psi)^{n_i}.
    \end{equation*}
\end{theorem}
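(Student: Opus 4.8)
The plan is to decompose the exterior square $\wedge^2 \rho_\lambda$ of the Weil representation attached to $\pi_\lambda$ into pieces matching the three products on the right-hand side, and then apply multiplicativity of Deligne's $\epsilon_0$-factors together with \Cref{cor:epsilon0_factors_are_defined_on_I_equivence_classes} so that everything can be computed on $I$-equivalence classes. Write $\phi_\lambda = (\rho_\lambda, N_\lambda) = \mathcal{M}(\pi_\lambda)$, and let $\phi_{f_i} = (\rho_{f_i}, 0) = \mathcal{M}(\pi_{f_i})$ be the cuspidal pieces. Since Macdonald's correspondence is compatible with direct sums (as explained at the end of \Cref{sec:macdonald_correspondence_and_epsilon_factors}), at the level of $I$-representations we have $(\rho_\lambda)_I \cong \bigoplus_{i=1}^t n_i \cdot (\rho_{f_i})_I$. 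The elementary linear-algebra identity for exterior squares of direct sums then gives, as virtual representations of $I$,
\begin{equation*}
    \wedge^2\!\left(\bigoplus_{i=1}^t n_i V_i\right)
    \cong \bigoplus_{1 \le i < j \le t} (n_i n_j)\, (V_i \otimes V_j)
    \;\oplus\; \bigoplus_{i=1}^t \binom{n_i}{2} (V_i \otimes V_i)
    \;\oplus\; \bigoplus_{i=1}^t n_i\, \wedge^2 V_i,
\end{equation*}
where $V_i$ denotes the space of $(\rho_{f_i})_I$. This is exactly the decomposition of $(\wedge^2\rho_\lambda)_I$ into the $I$-restrictions of $\rho_{f_i}\otimes\rho_{f_j}$ (for $i<j$), $\rho_{f_i}\otimes\rho_{f_i}$, and $\wedge^2\rho_{f_i}$, with the stated multiplicities.

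Next I would invoke multiplicativity of $\epsilon_0$ \cite[(5.2)]{Deligne73} to turn this direct-sum decomposition into the corresponding product of $\epsilon_0$-factors. The subtlety is that multiplicativity of Deligne's $\epsilon_0$ is a statement about actual Weil(-Deligne) representations, not merely their $I$-restrictions; but since $\wedge^2\phi_\lambda$ is tamely ramified, \Cref{cor:epsilon0_factors_are_defined_on_I_equivence_classes} tells us the $\epsilon_0$-factor depends only on the $I$-equivalence class. Hence we may choose a Weil-Deligne representative of $\wedge^2\phi_\lambda$ that is literally the direct sum $\bigoplus_{i<j}(n_in_j)(\phi_{f_i}\otimes\phi_{f_j}) \oplus \bigoplus_i \binom{n_i}{2}(\phi_{f_i}\otimes\phi_{f_i}) \oplus \bigoplus_i n_i\,\wedge^2\phi_{f_i}$ (whose $N$ is irrelevant for $\epsilon_0$), apply multiplicativity there, and conclude
\begin{equation*}
    \epsilon_0(\wedge^2\phi_\lambda, \psi)
    = \prod_{1\le i<j\le t}\epsilon_0(\rho_{f_i}\otimes\rho_{f_j},\psi)^{n_in_j}
      \cdot \prod_{i=1}^t\epsilon_0(\rho_{f_i}\otimes\rho_{f_i},\psi)^{\binom{n_i}{2}}
      \cdot \prod_{i=1}^t\epsilon_0(\wedge^2\rho_{f_i},\psi)^{n_i}.
\end{equation*}
Finally, unwinding \Cref{defn:epsilon}: $\epsilon_0(\rho_{f_i}\otimes\rho_{f_j},\psi) = \epsilon_0(\pi_{f_i}\times\pi_{f_j},\psi)$, likewise for the diagonal tensor terms, and $\epsilon_0(\wedge^2\rho_{f_i},\psi) = \epsilon_0(\pi_{f_i},\wedge^2,\psi)$, which yields the asserted formula.

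I expect the main obstacle to be purely bookkeeping rather than conceptual: one must be careful that the exterior-square-of-a-sum identity is applied to the $I$-restriction (where it is a genuine identity of honest representations, since $I$ is compact and $(\rho_\lambda)_I$ is a semisimple sum of the characters $\gamma$), and that passing back to Weil-Deligne representations to invoke multiplicativity is legitimate — this is precisely where \Cref{cor:epsilon0_factors_are_defined_on_I_equivence_classes} does the work, because the nilpotent operators $N$ arising on either side need not match, yet $\epsilon_0$ ignores them. Beyond that, this is the exact analogue of the proof of \Cref{thm:multiplicativity_of_tensor_epsilon_factor}, and, as the authors note, the techniques are already in place.
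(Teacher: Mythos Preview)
Your proposal is correct and follows essentially the same approach as the paper: reduce to the $I$-restriction via \Cref{cor:epsilon0_factors_are_defined_on_I_equivence_classes}, decompose $(\rho_\lambda)_I$ as $\bigoplus_i n_i (\rho_{f_i})_I$, apply the exterior-square-of-a-direct-sum identity, and then invoke multiplicativity of $\epsilon_0$ together with \Cref{defn:epsilon}. The only cosmetic difference is that the paper phrases the decomposition as ``repeatedly using $\epsilon_0(\wedge^2(\rho_1+\rho_2),\psi) = \epsilon_0(\rho_1\otimes\rho_2,\psi)\epsilon_0(\wedge^2\rho_1,\psi)\epsilon_0(\wedge^2\rho_2,\psi)$'' rather than writing out the closed-form identity with multiplicities, but these are equivalent.
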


\begin{proof}
    Similar to the proof of \Cref{thm:multiplicativity_of_tensor_epsilon_factor}. By definitions,
    $$\epsilon_0(\pi_\lambda, \wedge^2, \psi) = \epsilon_0(\wedge^2(\phi_\lambda), \psi) = \epsilon_0(\wedge^2(\rho_\lambda), \psi).$$
    Since $\rho_\lambda$ is tamely ramified, so is $\wedge^2(\rho)$. Thus, by \Cref{cor:epsilon0_factors_are_defined_on_I_equivence_classes},
    $$\epsilon_0(\wedge^2(\rho_\lambda), \psi) = \epsilon_0(\wedge^2(\rho_\lambda)_I, \psi) = \epsilon_0\left(\wedge^2\left((\rho_\lambda)_I\right), \psi\right).$$
    By \Cref{thm:explicit_epsilon0_factor_for_tame_representation_in_macdonald_parameterization}, we have
    $$(\rho_\lambda)_I = \sum_{f \in F\backslash \Gamma}|\lambda(f)|\sum_{\gamma \in f} \gamma,$$
    and for any $F$-orbit $f$,
    $$(\rho_f)_I = \sum_{\gamma \in f} \gamma.$$
    Therefore
    $$(\rho_\lambda)_I = \sum_{f} |\lambda(f)| (\rho_f)_I.$$
    Now the theorem follows by repeatedly using
    $$\epsilon_0(\wedge^2(\rho_1 + \rho_2), \psi) = \epsilon_0(\rho_1 \otimes \rho_2, \psi)\epsilon_0(\wedge^2(\rho_1), \psi)\epsilon_0(\wedge^2(\rho_2), \psi),$$
    and \Cref{defn:epsilon}.
\end{proof}

By \Cref{thm:tensor_epsilon_factor_as_a_product_of_gauss_sums}, we know how to compute the tensor product epsilon factors in terms of Gauss sums. Thus in order to compute $\epsilon_0(\pi_\lambda, \wedge^2, \psi)$, we just need to compute $\epsilon_0(\pi_f, \wedge^2, \psi)$ in light of \Cref{thm:multiplicativity_of_exterior_square_epsilon_factor}.

\begin{theorem}\label{thm:exterior_square_epsilon_factor_as_product_of_gauss_sums}
    Let $f = \{\alpha^{q^i}: i = 0, 1, \dots, n-1\}$ be an $F$-orbit of degree $n$ for some $\alpha \in \Gamma_n$. Let $m = \left\lfloor \frac{n}{2} \right\rfloor$ be the biggest integer smaller or equal to $\frac{n}{2}$. Then
    \begin{equation*}
        \epsilon_0(\pi_f, \wedge^2, \psi) = (-1)^{{n \choose 2}}q^{-\frac{1}{2}{n \choose 2}}\tau(\alpha^{1+q^m}, \psi_d)\prod_{i=1}^{m-1} \tau(\alpha^{1+q^i}, \psi_n),
    \end{equation*}
    where $d = n$ if $n$ is odd and $d = m$ if $n$ is even.
\end{theorem}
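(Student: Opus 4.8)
The plan is to reduce the computation of $\epsilon_0(\pi_f, \wedge^2, \psi)$ to an explicit decomposition of $\wedge^2\left((\rho_f)_I\right)$ into one-dimensional characters of $I$, grouped into $F$-orbits, and then apply \Cref{thm:explicit_epsilon0_factor_for_tame_representation_in_macdonald_parameterization} together with Hasse--Davenport (\Cref{thm:hasse-davenport}). First I would note, as in the proof of \Cref{thm:multiplicativity_of_exterior_square_epsilon_factor}, that $\epsilon_0(\pi_f, \wedge^2, \psi) = \epsilon_0\left(\wedge^2\left((\rho_f)_I\right), \psi\right)$, and that by \Cref{thm:explicit_epsilon0_factor_for_tame_representation_in_macdonald_parameterization} we have $(\rho_f)_I = \sum_{i=0}^{n-1} \alpha^{q^i}$, a sum of $n$ distinct characters (distinct because $d(f) = n$). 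Hence
\begin{equation*}
    \wedge^2\left((\rho_f)_I\right) = \sum_{0 \le i < j \le n-1} \alpha^{q^i + q^j},
\end{equation*}
a direct sum of $\binom{n}{2}$ characters of $I$, each of the form $\alpha^{q^i + q^j}$ with $0 \le i < j \le n-1$.

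The combinatorial heart of the argument is to understand how the index set $\{(i,j) : 0 \le i < j \le n-1\}$ breaks up under the Frobenius action $(i,j) \mapsto (i+1, j+1)$ (indices mod $n$). Writing $j = i + r$ with $1 \le r \le n-1$, the difference $r$ is almost an invariant of the $F$-orbit, except that $(i, i+r)$ with $i+r \ge n$ is the same unordered pair as $(i+r-n, i)$, which has difference $n - r$. So the pairs with gap $r$ and gap $n-r$ get identified, and for each $r$ with $1 \le r \le m-1$ (where $m = \lfloor n/2 \rfloor$) the characters $\{\alpha^{q^i + q^{i+r}} : 0 \le i \le n-1\}$ form a single $F$-set of size $n$ generated by $\alpha^{1 + q^r}$ with respect to $\Gamma_n$; one must check that $\alpha^{1+q^r} \in \Gamma_n$ and that its $F$-orbit has degree $n$ (true since $d(f) = n$). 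The remaining case is the ``diagonal'' gap: when $n = 2m$ is even, the pairs $(i, i+m)$ give only $m$ distinct unordered pairs, and $\{\alpha^{q^i + q^{i+m}} : 0 \le i \le m-1\}$ is an $F$-set of size $m$ generated by $\alpha^{1+q^m}$ with respect to $\Gamma_m$ (note $\alpha^{1+q^m} \in \Gamma_m$ since raising to the $q^m$ power fixes it); when $n$ is odd there is no diagonal term and all gaps $1 \le r \le m$ contribute full $F$-sets of size $n$ — so I would fold the $r = m$ term into the product for odd $n$, which explains the appearance of $\psi_d$ with $d = n$ for $n$ odd and $d = m$ for $n$ even. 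A careful count confirms $\binom{n}{2} = (m-1)n + d$ characters in total, matching both parities.

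With this decomposition in hand, \Cref{thm:explicit_epsilon0_factor_for_tame_representation_in_macdonald_parameterization} gives $\epsilon_0\left(\wedge^2\left((\rho_f)_I\right), \psi\right) = (-1)^{\binom{n}{2}} q^{-\frac{1}{2}\binom{n}{2}}$ times a product of Gauss sums $\tau(h, \psi_n)$ over the $F$-sets $h$ appearing, and Hasse--Davenport (\Cref{thm:hasse-davenport}) lets me rewrite each such Gauss sum of an $F$-set as a power of the Gauss sum of its underlying orbit — or, equivalently, I can just use the $\tau(h, \psi)$ notation for $F$-sets introduced before \Cref{thm:hasse-davenport}, so that $\tau$ of the full-size $F$-set generated by $\alpha^{1+q^r}$ with respect to $\Gamma_n$ is literally $\tau(\alpha^{1+q^r}, \psi_n)$, and $\tau$ of the half-size $F$-set generated by $\alpha^{1+q^m}$ with respect to $\Gamma_m$ is $\tau(\alpha^{1+q^m}, \psi_m) = \tau(\alpha^{1+q^m}, \psi_d)$ when $n$ is even. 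Assembling, this yields exactly
\begin{equation*}
    \epsilon_0(\pi_f, \wedge^2, \psi) = (-1)^{\binom{n}{2}} q^{-\frac{1}{2}\binom{n}{2}} \tau(\alpha^{1+q^m}, \psi_d) \prod_{i=1}^{m-1} \tau(\alpha^{1+q^i}, \psi_n).
\end{equation*}

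The main obstacle I anticipate is the bookkeeping in the middle paragraph: getting the orbit structure of the gap-$r$ pairs exactly right, in particular the boundary behavior when the gap $r$ equals $m$ — distinguishing cleanly between the odd case (where $\gcd$ considerations make the $r=m$ set still of full size $n$) and the even case (where the involution $(i,j) \mapsto (j, i+n)$ has fixed structure forcing the $r=m$ set down to size $m$) — and verifying that none of the $\alpha^{1+q^i}$ are trivial or have orbit smaller than claimed, which is needed both for the count $\binom{n}{2}$ to come out and for the Hasse--Davenport reduction to apply with the stated $\psi_l$'s. Everything else is a direct invocation of \Cref{thm:explicit_epsilon0_factor_for_tame_representation_in_macdonald_parameterization} and \Cref{thm:hasse-davenport}.
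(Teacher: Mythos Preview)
Your approach is correct and matches the paper's proof exactly: decompose $\wedge^2((\rho_f)_I) = \sum_{0 \le i < j \le n-1} \alpha^{q^i+q^j}$ by the gap $r = j-i$ into $F$-sets $h_r$ generated by $\alpha^{1+q^r}$ (with respect to $\Gamma_n$ for $1 \le r < n/2$, and with respect to $\Gamma_m$ for $r = m$ when $n = 2m$), then apply \Cref{thm:explicit_epsilon0_factor_for_tame_representation_in_macdonald_parameterization} and \Cref{thm:hasse-davenport}. One remark on your anticipated obstacle: you do \emph{not} need to verify that each $\alpha^{1+q^r}$ has $F$-orbit of degree exactly $n$ (indeed this can fail, and your parenthetical ``true since $d(f)=n$'' is not justified), because the $F$-set with respect to $\Gamma_n$ is by definition a multiset of size $n$ regardless of the underlying orbit, $\tau(\alpha^{1+q^r}, \psi_n)$ is well-defined for any element of $\Gamma_n$, and Hasse--Davenport handles the passage between $F$-sets and $F$-orbits automatically --- so both the count $\binom{n}{2} = (m-1)n + d$ and the final formula go through with no orbit-size hypothesis.
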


\begin{proof}
    Similar to the proof of \Cref{thm:tensor_epsilon_factor_as_a_product_of_gauss_sums}. The key is that
    $$\left(\wedge^2(\rho_f)\right)_I = \sum_{1 \le i < j \le n} \alpha^{q^i+q^j} = \sum_{j = 1}^{m}\sum_{\beta \in h_j} \beta,$$
    where $h_j$ is the $F$-set generated by $\alpha^{1+q^{j}}$ with respect to $\Gamma_n$ if $1 \le j < \frac{n}{2}$, and if $n = 2m$ is even, then $h_m$ is the $F$-set generated by $\alpha^{1 + q^m}$ with respect to $\Gamma_m$ (Note that when $n=2m$ is even, $\alpha^{q^{2m}} = \alpha$ and
    $(\alpha^{1+q^m})^{q^m} = \alpha^{q^m + q^{2m}} = \alpha^{q^m+1} = \alpha^{1+q^m}$
    so $\alpha^{1+q^m} \in \Gamma_m$). It can be checked that these $h_j$'s, for $1 \le j \le m$, form a partition of $\{\alpha^{q^i+q^j}: 1 \le i < j \le n\}$.
\end{proof}

Let $f = \{\alpha^{q^i}: i = 0, 1, \dots, n-1\}$ be an $F$-orbit in $\Gamma_n$ for some $\alpha \in \Gamma_n$. Assume that $\alpha$ is not trivial when restricted to $\finiteField_{m}^\times$ if $n=2m$, or equivalently, $\alpha^{1+q^m}$ is not the trivial character. This assumption assures that the cuspidal representation $\pi_f$ does not have non-trivial Shalika vectors \cite[Section 2.3.3]{YeZeligher18}. The authors define the exterior square gamma factor $\gamma(\pi_f, \wedge^2, \psi)$ in \cite{YeZeligher18}, and show that
\begin{equation}\label{eqn:equality_of_exterior_square_gamma_factor_and_jacquet_shalika_gamma_factor}
    \gamma(\pi_f, \wedge^2, \psi) = \gamma_{JS}(s, \Pi_f, \wedge^2, \psi),
\end{equation}
where, as before, $\Pi_f$ is a level zero supercuspidal representation constructed from $\pi_f$, and $\gamma_{JS}(s, \Pi_f, \wedge^2, \psi)$ is the Jacquet-Shalika gamma factor from \cite{JacquetShalika90,Matringe14,CogdellMatringe15}. It is natural to ask what is the relation between $\epsilon_0(\pi_f, \wedge^2, \psi)$ and $\gamma(\pi_f, \wedge^2, \psi)$, which might enable us to express $\gamma(\pi_f, \wedge^2, \psi)$ as a product of Gauss sums. Unfortunately, we don't have the matching between $\epsilon_{JS}(s, \Pi_f, \wedge^2, \psi)$ and $\epsilon(s, \wedge^2(\phi_f), \psi)$ under the local Langlands correspondence. Thus, the arguments in \Cref{thm:equality_of_rankin_selberg_gamma_factor_and_tensor_product_epsilon_factor} can not go through. We want to mention that $\epsilon_{LS}(s, \Pi_f, \wedge^2, \psi)$, the exterior square epsilon factor coming from Langlands-Shahidi method, does match $\epsilon(s, \wedge^2(\phi_f), \psi)$. This is the work of Cogdell, Shahidi and Tsai \cite{CogdellShahidiTsai14}. But we don't know whether or not the two exterior square epsilon factors coming from the Jacquet-Shalika integral representation and the Langlands-Shahidi method are the same.

Nevertheless, since $\gamma(\pi_f, \wedge^2, \psi)$ and $\epsilon_0(\pi, \wedge^2, \psi)$ are constants, there must exist a constant $c_f$ depending on $f$ such that
\begin{equation*}
    \begin{split}
        \gamma(\pi_f, \wedge^2, \psi)
        &= c_f \, \epsilon_0(\pi_f, \wedge^2, \psi) \\
        &= c_f \cdot (-1)^{{n \choose 2}}q^{-\frac{1}{2}{n \choose 2}}\tau(\alpha^{1+q^m}, \psi_d)\prod_{i=1}^{m-1} \tau(\alpha^{1+q^i}, \psi_n).
    \end{split}
\end{equation*}
We know from \cite{YeZeligher18} that $|\gamma(\pi_f, \wedge^2, \psi)|=1$ and that the absolute value of a Gauss sum $\tau(\alpha, \psi_n)$ is $q^{\frac{n}{2}}$ if $\alpha$ is not trivial. Therefore, by taking absolute values on both sides of the equation above, we get $|c_f| = 1$. We conjecture that $c_f = 1$. For $n = 2$, $\gamma(\pi_f, \wedge^2, \psi)$ degenerates into a Godement-Jacquet gamma factor of the central character $\omega_f$ of $\pi_f$, thus $c_f = 1$ is a consequence of \Cref{cor:rankin_selberg_factor_as_a_product_of_gauss_sums}. For $n = 3$, $\gamma(\pi_f, \wedge^2, \psi)$ has a nice expression in terms of the Bessel function of $\pi_f$ \cite{YeZeligher18}, which can be manipulated into the $3 \times 1$ Rankin-Selberg gamma factor $\overline{\gamma(\pi_f \times \omega_f^{-1}, \psi)}$. Thus $c_f = 1$ is again a consequence of \Cref{cor:rankin_selberg_factor_as_a_product_of_gauss_sums}. We don't know how to prove $c_f = 1$ for $n \ge 4$. Let $\Pi_f$ be any level zero representation constructed from $\pi_f$. We know that $\epsilon_{JS}(s, \pi_f, \wedge^2, \psi) = \gamma(\pi_f, \wedge^2, \psi)$ \cite{YeZeligher18}. On the other hand, by \Cref{cor:sufficient_condition_for_I_invariant_subspace_to_vanish} and \cite{CogdellShahidiTsai14}, $\epsilon_{LS}(s, \Pi_f, \wedge^2, \psi) = \epsilon_0(\pi_f, \wedge^2, \psi)$. Therefore, $c_f = 1$ is equivalent to $\epsilon_{JS}(s, \Pi_f, \wedge^2, \psi) = \epsilon_{LS}(s, \Pi_f, \wedge^2, \psi)$.

The above discussion requires $\alpha|_{\finiteField_m^\times} \ne 1$ if $n = 2m$, because $\gamma(\pi_f, \wedge^2, \psi)$ is not defined in this special case. Interestingly, if we assume $n = 2m$ and $\alpha|_{\finiteField_m^\times} = 1$, we can show by computations that $\epsilon_{JS}(s, \Pi_f, \wedge^2, \psi) = \epsilon_{LS}(s, \Pi_f, \wedge^2, \psi)$ for any level zero supercuspidal representation $\Pi_f$ constructed from $\pi_f$.

Let $n = 2m$ and $\alpha|_{\finiteField_m^\times} = 1$. We have computed in \cite{YeZeligher18} that
$$\epsilon_{JS}(s, \Pi_f, \wedge^2, \psi) = \omega_{\Pi_f}(\varpi)^{-1}q^{m\left(s - \frac{1}{2}\right)s},$$
where $\omega_{\Pi_f}$ is the central character of $\Pi_f$. Let $\phi_f$ be the image of $\Pi_f$ under the local Langlands correspondence. Let $W = \wedge^2(\phi_f)$. Then from \Cref{eqn:arithmetic_epsilon_of_tame_representation},
$$\epsilon(s, \wedge^2(\phi_f), \psi) = \epsilon(W, \psi)q^{\left(\dim W^I\right)s} = \epsilon_0(W, \psi)\det(-F, W^I)^{-1}q^{\left(\dim W^I\right)s}.$$
We computed that $\epsilon_0(W, \psi) = -q^{-\frac{m}{2}}$ using \Cref{thm:exterior_square_epsilon_factor_as_product_of_gauss_sums} and computations of the Gauss sums, $\det(-F, W^I) = -\omega_{\Pi_f}(\varpi)$ using the local Langlands correspondence, and $\dim W^I = m$ by counting the multiplicity of the trivial character in $W_I = \sum_{1 \le i < j \le n}\alpha^{q^i+q^j}$, where $W_I$ is the restriction of $W$ to $I$. See also \cite[Section 4.4]{yephd19} for details. Therefore,
$$\epsilon(s, \wedge^2(\phi_f), \psi) = \omega_{\Pi_f}(\varpi)^{-1}q^{m\left(s - \frac{1}{2}\right)s} = \epsilon_{JS}(s, \Pi_f, \wedge^2, \psi).$$
We know $\epsilon_{LS}(s, \Pi_f, \wedge^2, \psi) = \epsilon(s, \wedge^2(\phi_f), \psi)$ from \cite{CogdellShahidiTsai14}, so
$$\epsilon_{JS}(s, \Pi_f, \wedge^2, \psi) = \epsilon_{LS}(s, \Pi_f, \wedge^2, \psi).$$

\bibliographystyle{abbrv}
\bibliography{references}
\end{document}